\newcommand{\re}{\mathbb{R}}
\newcommand{\cpx}{\mathbb{C}}
\newcommand{\N}{\mathbb{N}}
\renewcommand{\P}{\mathbb{P}}
\newcommand{\lmd}{\lambda}
\newcommand{\dt}{\delta}
\newcommand{\mF}{\mathcal{F}}
\newcommand{\mA}{\mathcal{A}}
\newcommand{\mB}{\mathcal{B}}
\def\af{\alpha}
\def\rank{\mbox{rank}}
\newcommand{\sig}{\sigma}
\newcommand{\Sig}{\Sigma}
\newcommand{\reff}[1]{(\ref{#1})}
\newcommand{\prm}{\prime}
\newcommand{\mc}[1]{\mathcal{#1}}
\newcommand{\mt}[1]{\mathtt{#1}}
\newcommand{\supp}[1]{\mbox{supp}(#1)}
\newcommand{\bdes}{\begin{description}}
\newcommand{\edes}{\end{description}}
\newcommand{\bnum}{\begin{enumerate}}
\newcommand{\enum}{\end{enumerate}}
\newcommand{\bit}{\begin{itemize}}
\newcommand{\eit}{\end{itemize}}
\newcommand{\bea}{\begin{eqnarray}}
\newcommand{\eea}{\end{eqnarray}}
\newcommand{\be}{\begin{equation}}
\newcommand{\ee}{\end{equation}}
\newcommand{\baray}{\begin{array}}
\newcommand{\earay}{\end{array}}
\newcommand{\bca}{\begin{cases}}
\newcommand{\eca}{\end{cases}}
\newcommand{\bcen}{\begin{center}}
\newcommand{\ecen}{\end{center}}
\newcommand{\bbm}{\begin{bmatrix}}
\newcommand{\ebm}{\end{bmatrix}}
\newcommand{\bmx}{\begin{matrix}}
\newcommand{\emx}{\end{matrix}}
\newcommand{\bpm}{\begin{pmatrix}}
\newcommand{\epm}{\end{pmatrix}}
\newcommand{\btab}{\begin{tabular}}
\newcommand{\etab}{\end{tabular}}
\newcommand{\balg}{\begin{algorithm}}
\newcommand{\ealg}{\end{algorithm}}
\newcommand{\br}{\begin{remark}}
\newcommand{\er}{\end{remark}}
\newcommand{\bex}{\begin{example}}
\newcommand{\eex}{\end{example}}
\newtheorem{theorem}{Theorem}[section]
\newtheorem{lemma}[theorem]{Lemma}
\newtheorem{proposition}[theorem]{Proposition}
\newtheorem{prop}[theorem]{Proposition}
\theoremstyle{definition}
\newtheorem{definition}[theorem]{Definition}
\newtheorem{example}[theorem]{Example}
\newtheorem{algorithm}[theorem]{Algorithm}
\newtheorem{remark}[theorem]{Remark}
\numberwithin{equation}{section}
\begin{document}

\title[Tensor Eigenvalue Complementarity Problems]
{Tensor Eigenvalue Complementarity Problems}



\author{Jinyan Fan}
\address{School of Mathematical Sciences, and MOE-LSC, Shanghai Jiao Tong University,
Shanghai 200240, P.R. China}
\email{jyfan@sjtu.edu.cn}

\author{Jiawang Nie}
\address{
Department of Mathematics,  University of California San Diego,  9500
Gilman Drive,  La Jolla,  California 92093,  USA.}
\email{njw@math.ucsd.edu}

\author{Anwa Zhou}
\address{
Department of Mathematics, Shanghai University,
Shanghai 200444, P.R. China}
\email{zhouanwa@shu.edu.cn}

\thanks{Jinyan Fan was partially supported by the NSFC grants 11171217 and 11571234.
Jiawang Nie was partially supported by the NSF grants DMS-1417985 and DMS-1619973.
Anwa Zhou was partially supported by the CPSF grants BX201600097 and 2016M601562.
}

\subjclass[2010]{65K10, 15A18, 65F15, 90C22}

\keywords{tensor eigenvalues, eigenvalue complementarity,
polynomial optimization, Lasserre relaxation, semidefinite program}

\begin{abstract}
This paper studies tensor eigenvalue complementarity problems.
Basic properties of standard and complementarity
tensor eigenvalues are discussed.
We formulate tensor eigenvalue complementarity problems
as constrained polynomial optimization.
When one tensor is strictly copositive,
the complementarity eigenvalues can be computed by solving
polynomial optimization with normalization by strict copositivity.
When no tensor is strictly copositive,
we formulate the tensor eigenvalue complementarity problem
equivalently as polynomial optimization by a randomization process.
The complementarity eigenvalues can be computed sequentially.
The formulated polynomial optimization can be solved by
Lasserre's hierarchy of semidefinite relaxations.
We show that it has finite convergence for generic tensors.
Numerical experiments are presented to show
the efficiency of proposed methods.
\end{abstract}

\maketitle

\section{Introduction}

Let $\re$ be the real field, $\mathbb{R}^n$
be the space of all real $n$-dimensional vectors,
and $\mathbb{R}^{n\times n}$ be the space of all real $n$-by-$n$ matrices.
Denote by $\re_+^n$ the nonnegative orthant, i.e., the set of vectors
in $\mathbb{R}^n$ whose entries are all nonnegative.

The classical matrix eigenvalue complementarity problem (MEiCP) is that:
for given two matrices $A, B\in \mathbb{R}^{n\times n}$, we want to
find a number $\lambda\in \mathbb{R}$ and a nonzero vector
$x \in \mathbb{R}^n$ such that
\be
\label{EiCP}
0 \leq x \perp (\lambda Bx - Ax) \geq 0.
\ee
In the above, $a \perp b$ means that the two vectors $a,b$
are perpendicular to each other.
For $(\lambda, x)$ satisfying \reff{EiCP}, $\lambda$ is called a
complementary eigenvalue of $(A, B)$ and
$x$ is called the associated complementary eigenvector.
MEiCPs have wide applications,
such as static equilibrium states of mechanical systems
with unilateral friction \cite{Pinto01},
the dynamic analysis of structural mechanical systems
\cite{Martins99,Martins00} and the contact problem in mechanics \cite{Martins04}.
The MEiCP \reff{EiCP} has at least one solution if
$ x^T B x \neq 0$ for all $x \in \re_+^n \setminus \{0\}$
(cf.~\cite{Judice09,Pinto10}).
When $A$ and $B$ are symmetric, the problem \reff{EiCP}
can be reduced to finding a stationary point of
the quotient $x^TAx/x^TBx$ over the standard simplex. For such cases,
nonlinear optimization methods can
be applied to solve MEiCPs
(cf.~\cite{Judice08,Queiroz04}).
When $A,B$ are not symmetric,
other approaches were proposed for solving MEiCPs,
such as the branch-and-bound technique \cite{Judice07,Judice09},
the scaling-and-projection and the power iteration
\cite{Pinto09, Pinto10},
semismooth Newton-type methods \cite{Adly11, Adly13}.
Most existing methods aim at
computing one of the complementarity eigenvalues.
The matrix complementarity problem is NP-hard \cite{Judice07}.

Eigenvalues were recently studied for tensors \cite{DingWei15,HHLQ13,Lim2005,Qi2005}.
For an integer $m>0$, an {\it $m$-th order $n$-dimensional} tensor $\mathcal{A}$ is
a multi-array indexed as
\[
\mathcal{A} \, := \, (\mathcal{A}_{i_1,\ldots,i_m})_{1 \leq i_1, \ldots , i_m \leq n}.
\]
Let $\mathrm{T}^m(\mathbb{R}^{n})$ be the space of all such real tensors.
For $x := (x_1,\ldots,x_n) \in \re^n$, denote by $\mathcal{A}x^{m-1}$
the vector in $\re^n$ such that, for each $i=1,2,\ldots,n$,
\be \label{vc:Ax:m-1}
(\mathcal{A}x^{m-1})_i=\sum_{i_2,\ldots , i_m=1}^{n}
\mathcal{A}_{i,i_2,\ldots,i_m} x_{i_2}\cdots x_{i_m}.
\ee
Denote by $\mathcal{A}x^{m}$ the homogeneous polynomial
\[
\mathcal{A}x^{m}=\sum_{i_1,i_2,\ldots , i_m=1}^{n}
\mathcal{A}_{i_1,i_2,\ldots,i_m} x_{i_1}x_{i_2}\cdots x_{i_m}.
\]
Clearly,
$
\mathcal{A}x^{m}= \sum_{j =1}^{n} x_j (\mathcal{A}x^{m-1})_j.
$
Lim \cite{Lim2005} and Qi \cite{Qi2005} introduced
the notion of tensor eigenvalues.
Generalized eigenvalues can be defined similarly for tensors \cite{DingWei15}.
For two nonzero tensors $\mathcal{A},\mathcal{B}\in \mathrm{T}^m(\mathbb{R}^{n})$,
if a pair $(\lambda,x)\in \mathbb{C}\times (\mathbb{C}^n \setminus \{0\})$
satisfies the equation
\begin{equation}\label{eq:eigendef}
 \lambda \mathcal{B}x^{m-1} -\mathcal{A}x^{m-1}=0,
\end{equation}
then $\lambda$ is called a $\mathcal{B}$-eigenvalue of $\mathcal{A}$
and $x$ is the associated $\mathcal{B}$-eigenvector.
Such $(\lmd, x)$ is called a $\mathcal{B}$-eigenpair.
Recently, Cui, Dai and Nie~\cite{Cui2014} studied
$\mathcal{B}$-eigenvalues of symmetric tensors.
They proposed a semidefinite relaxation approach
for computing all real $\mathcal{B}$-eigenvalues sequentially,
from the largest to the smallest.
Each eigenvalue can be computed by solving a finite hierarchy of semidefinite relaxations.
This approach was originally used for computing the hierarchy
of local minimums for polynomial optimization \cite{Nie-local}.

Recently, Ling et al.~\cite{Ling2015} introduced
the tensor eigenvalue complementarity problem (TEiCP):
for two given tensors $\mathcal{A},\mathcal{B}\in \mathrm{T}^m(\mathbb{R}^{n})$,
we want to find a number $\lambda\in \mathbb{R}$
and a nonzero vector $x \in\mathbb{R}^n$ such that
\begin{equation}
\label{TEiCP}
0\leq x \perp ( \lambda \mathcal{B} x^{m-1} - \mathcal{A} x^{m-1} ) \geq 0.
\end{equation}
For such a pair $(\lambda, x)$, $\lambda$ is called a
complementary eigenvalue of $(\mathcal{A},\mathcal{B})$
and $x$ is called the associated complementary eigenvector.
For convenience, the complementary eigenvalues and eigenvectors
are respectively called {\it C-eigenvalues and C-eigenvectors}.
The above $(\lmd, x)$ is called a {\it C-eigenpair}.
Clearly, when $m=2$, the TEiCP is reduced to the classical
matrix eigenvalue complementarity problem.
TEiCPs have wide applications such as
higher-order Markov chains \cite{Ng2009},
magnetic resonance imaging \cite{Qi2010}.
We refer to \cite{Chen2015,Ling2015}
for more applications of TEiCPs.

In the existing references (cf.~\cite{Ling2015}),
C-eigenvalues defined as in \reff{TEiCP}
are also called {\it Pareto-eigenvalues}. Indeed, Ling et al.~\cite{Ling2015}
considered more general tensor eigenvalue complementarity problems,
where the conditions $x \geq 0$ and $\lmd \mB x^{m-1} - \mA x^{m-1} \geq 0$
are replaced by
\[
x \in K, \quad
\lmd \mB x^{m-1} - \mA x^{m-1} \in K^*.
\]
Here, $K$ is a closed convex cone and $K^*$ is the dual cone.
In \cite{Ling2015}, it was shown that the TEiCP has at least one solution,
under the assumption that
$\mathcal{B}x^m \neq 0$ for all $ x \in \mathbb{R}_+^n \setminus \{0\}$.
They also gave an upper bound for the number of C-eigenvalues,
for nonsingular tensor pairs $(\mA, \mB)$ (see \S\ref{ssc:teigvec}
for the definition). Moreover,
a scaling-and-projection algorithm was given for solving TEiCPs.
Recently, Chen et al.~\cite{Chen2015} have further new work on TEiCPs.
When the tensors are symmetric, they
reformulated the problem as nonlinear optimization and
then proposed a shifted projected power method.
Chen and Qi~\cite{ChenQi2015} reformulated the TEiCP as a system of
nonlinear equations and proposed a damped semi-smooth Newton method for solving it.
Some properties of Pareto-eigenvalues are further studied in \cite{Xu2015}.
Generally, the tensor eigenvalue complementarity problem is difficult to solve.
It is also NP-hard, since the TEiCP includes the MEiCP as a special case.

\bigskip
\noindent
{\bf Contributions}\,
In this paper, we study how to solve TEiCPs.
Our aim is to compute all C-eigenvalues,
if there are finitely many ones.
We formulate TEiCPs equivalently as polynomial optimization problems,
and then solve them by Lasserre type semidefinite relaxations.
Throughout the paper, a property is said to
be {\it generically} true
in a tensor space if it holds in an open dense subset of that space,
in the Zariski topology. For such a property,
a tensor in that open dense set is called a {\it generic} tensor.

First, we study properties of generalized eigenvalues
of tensor pairs. For nonsingular tensor pairs,
it is known that the number of eigenvalues is finite
(cf.~\cite{DingWei15}). For generic tensors,
we show a further new result: for each eigenvalue, there is
a unique eigenvector, up to scaling. Thus, the number
of normalized eigenvectors is also finite.
Similarly, for generic tensors,
we can also show that the number of C-eigenvalues and
C-eigenvectors (up to scaling) are finite.
These results are given in Section~\ref{sc:pro:teig}.

Second, we show how to solve tensor eigenvalue complementarity problems
when the tensor $\mB$ is strictly copositive
(i.e., $\mB x^m >0$ for all $ x \in \mathbb{R}_+^n \setminus \{0\}$).
For such cases, the complementarity eigenvectors
can be normalized such that $\mB x^m = 1$.
Then, we formulate the problem as constrained polynomial optimization.
The complementarity eigenvalues can be computed sequentially,
from the smallest to the biggest. Each of them
can be solved by a sequence of semidefinite relaxations.
We prove that such sequence has finite convergence for generic tensors,
subject to that $\mB$ is strictly copositive.
This will be shown in Section~\ref{sect:cop}.

Third, we study how to solve tensor eigenvalue complementarity problems
when $\mB$ is not not copositive. For such tensors, a C-eigenvector $x$
may not be normalized as $\mB x^m = 1$.
Thus, we formulate TEiCPs as polynomial optimization
in a different way. By a randomization process,
the complementarity eigenvectors are classified in two cases.
For each case, the TEiCP is equivalently formulated as
a polynomial optimization problem.
The C-eigenvectors can be computed in order,
by choosing a randomly chosen objective.
Each of them can be computed by a sequence of semidefinite relaxations.
For generic tensors, we show that it converges in finitely many steps.
The results are shown in Section~\ref{sc:gTEiCP}.

In Section~\ref{sc:numexp}, we present numerical experiments for
solving tensor eigenvalue complementarity problems.
Some preliminaries in polynomial optimization and
moment problems are given in Section~\ref{sc:prelim}.

\section{Preliminaries}
\label{sc:prelim}

\noindent
{\bf Notation}\,
The symbol $\N$ (resp., $\re$, $\cpx$) denotes the set of
nonnegative integers (resp., real, complex numbers).
For integer $n>0$, $[n]$ denotes the set $\{1,\ldots,n\}$.
For two vectors $a,b \in \re^n$, $a \circ b$ denotes the
Hadamard product of $a$ and $b$, i.e.,
the product is defined componentwise.
For $x=(x_1,\ldots,x_n)$ and $\af = (\af_1, \ldots, \af_n)$,
denote the monomial power
\[
x^\af := x_1^{\af_1}\cdots x_n^{\af_n}.
\]
The symbol $[x]_d$ denotes the following vector of monomials
\[
[x]_d^T = [\, 1 \quad  x_1 \quad \cdots \quad x_n \quad x_1^2 \quad
x_1x_2 \quad \cdots \cdots
\quad x_1^d \quad x_1^{d-1}x_2 \quad \cdots \cdots \quad x_n^d \,],
\]
The symbol $\re[x] := \re[x_1,\ldots,x_n]$
denotes the ring of polynomials in $x:=(x_1,\ldots,x_n)$
and with real coefficients.
The ring $\cpx[x] := \cpx[x_1,\ldots,x_n]$
is similarly defined over the complex field.
The $deg(p)$ denotes the degree of a polynomial $p$.
The cardinality of a set $S$ is denoted as $|S|$.
For $t\in \re$, $\lceil t\rceil$ (resp., $\lfloor t\rfloor$)
denotes the smallest integer not smaller
(resp., the largest integer not bigger) than $t$.
For a matrix $A$, $A^T$ denotes its transpose.
For a symmetric matrix $X$, $X\succeq 0$ (resp., $X\succ 0$) means
$X$ is positive semidefinite (resp., positive definite).
For a vector $u$, $\| u \|$ denotes its standard Euclidean norm.
The $e_i$ denotes the standard $i$-th unit vector in $\N^n$.

\subsection{Polynomial optimization}

In this section, we review some basics in polynomial optimization.
We refer to \cite{Lasserre01,Lasserre09, Laurent} for more details.

An ideal $I$ in $\re[x]$ is a subset of $\re[x]$
such that $ I \cdot \re[x] \subseteq I$
and $I+I \subseteq I$. For a tuple $h=(h_1,\ldots,h_m)$ in $\re[x]$,
denote the ideal
\[
I(h) :=
h_1 \cdot \re[x] + \cdots + h_m  \cdot \re[x].
\]
The $k$-th {\it truncation} of the ideal $I(h)$,
denoted as $I_k(h)$, is the set
\be \label{Ik}
h_1 \cdot \re[x]_{k-\deg(h_1)} + \cdots + h_m  \cdot \re[x]_{k-\deg(h_m)}.
\ee
In the above, $\mathbb{R}[x]_t$ is the set of polynomials
in $\re[x]$ with degrees at most $t$.
Clearly, $I(h)=\cup_{k\in \mathbb{N}} I_{k}(h)$.

A polynomial $\psi$ is said to be a sum of squares (SOS)
if $\psi = q_1^2+\cdots+ q_k^2$ for some $q_1,\ldots, q_k \in \re[x]$.
The set of all SOS polynomials in $x$ is denoted as $\Sig[x]$.
For a degree $m$, denote the truncation
\[
\Sig[x]_m := \Sig[x] \cap \re[x]_m.
\]
For a tuple $g=(g_1,\ldots,g_t)$,
its {\it quadratic module} is the set
\[
Q(g):=  \Sig[x] + g_1 \cdot \Sig[x] + \cdots + g_t \cdot \Sig[x].
\]
The $k$-th truncation of $Q(g)$ is the set
\be \label{Qk}
Q_k(g) :=
\Sig[x]_{2k} + g_1 \cdot \Sig[x]_{d_1} + \cdots + g_t \cdot \Sig[x]_{d_t}
\ee
where each $d_i = 2k - \deg(g_i)$.
Note that $Q(g)= \cup_{k\in \mathbb{N}} Q_k (g)$.

The set $I(h)+ Q(g)$ is said to be archimedean if there exists
$N>0$ such that $N-\|x\|^2\in I(h)+Q(g)$.
For the tuples $h,g$ as above, denote
\be \label{E(h):S(g)}
E(h) := \{x\in \mathbb{R}^{n} \mid \ h(x)= 0 \}, \qquad
S(g) := \{x\in \mathbb{R}^{n} \mid \ g(x) \geq 0\}.
\ee
Clearly, if $I(h)+ Q(g)$ is archimedean, then the set $E(h) \cap S(g)$ is compact.
On the other hand, if $E(h) \cap S(g)$ is compact,
then $I(h)+ Q(g)$ can be forced to be archimedean
by adding the polynomial $M-\|x\|^2$ to the tuple $g$,
for sufficiently large $M$.

If $f \in I(h)+Q(g)$, then $f\geq 0$ on the set $E(h) \cap S(g)$.
Conversely, if $f > 0$ on $E(h) \cap S(g)$
and $I(h)+ Q(g)$ is archimedean, then $f \in I(h)+Q(g)$.
This is called {\it Putinar's Positivstellensatz}
(cf.~\cite{Putinar}) in the literature.
Interestingly, when $f$ is only nonnegative on $E(h) \cap S(g)$,
we also have $f \in I(h)+Q(g)$,
if some standard optimality conditions hold (cf.~\cite{Nie3}).

\subsection{Moment and localizing matrices}
\label{ssc:momloc}

For $\alpha=(\alpha_1,\ldots,\alpha_n)$,
denote $|\alpha| :=\alpha_1+\ldots+\alpha_n$ and
$$
\mathbb{N}_d^{n} := \{ \alpha \in \mathbb{N}^{n}: \, |\alpha| \leq d\}.
$$
Let $\re^{\N_d^n}$ be the space of real vectors indexed by $\af \in \N^n_d$.
A vector in $\re^{\N_d^n}$ is called a
{\it truncated multi-sequence} (tms) of degree $d$.
For $y \in \re^{\N_d^n}$, define the operation
\be \label{df:<p,y>}
\big \langle \sum_{\af \in \N_d^n}
p_\af x_1^{\af_1}\cdots x_n^{\af_n},
y  \big \rangle := \sum_{\af \in \N_d^n}  p_\af y_\af.
\ee
(In the above, each $p_\af$ is a coefficient.)
We say that $y$ admits a representing measure supported in a set $T$
if there exists a Borel measure $\mu$ such that
its support, denoted as $\supp{\mu}$, is contained in $T$ and
\[
y_\af = \int_T  x^\af \mt{d} \mu \quad \forall \af \in \N_d^n.
\]

For a polynomial $q \in \re[x]_{2k}$,
the $k$-th {\it localizing matrix} of $q$,
generated by a tms $y \in \re^{\N^n_{2k}}$,
is the symmetric matrix $L_q^{(k)}(y)$ satisfying
\[
vec(p_1)^T \Big( L_q^{(k)}(y) \Big) vec(p_2)  = \langle q p_1p_2, y \rangle,
\]
for all $p_1,p_2 \in \re[x]$ with
$\deg(p_1), \deg(p_2) \leq k - \lceil \deg(q)/2 \rceil$.
In the above, $vec(p_i)$ denotes the coefficient vector of the polynomial $p_i$.
When $q = 1$ (the constant one polynomial),
$L_q^{(k)}(y)$ becomes a {\it moment matrix} and is denoted as
\be \label{moment:mat}
M_k(y):= L_{1}^{(k)}(y).
\ee
When $q=(q_1,\ldots,q_r)$ is a tuple of $r$ polynomials, then we denote
\be \label{mat:locliz}
L_q^{(k)} (y) := \left(L_{q_1}^{(k)} (y),\ldots, L_{q_r}^{(k)} (y)\right).
\ee
We refer to \cite{CurtoF,Helton,Nie1} for localizing and moment matrices.

Let $h = (h_1, \ldots, h_m)$ and
$g=(g_1,\ldots, g_t)$ be two polynomial tuples.
In applications, people are often interested in
whether or not a tms
$y \in \re^{\N^n_{2k}}$ admits a representing measure
whose support is contained in
$E(h) \cap S(g)$, as in \reff{E(h):S(g)}. For this to be true,
a necessary condition (cf.~\cite{CurtoF,Helton}) is that
\be \label{Lg>=0}
M_k(y)\succeq 0, \quad L_{g}^{(k)}(y) \succeq 0, \quad
L_{h}^{(k)}(y) = 0.
\ee
However, the above is typically not sufficient. Let
\[
d_0 = \max \, \{1,  \lceil \deg(h)/2 \rceil, \lceil \deg(g)/2 \rceil \}.
\]
If $y$ satisfies \reff{Lg>=0} and the rank condition
\be \label{con:fec}
\rank \, M_{k-d_0}(y) \, = \, \rank \, M_k(y),
\ee
then $y$ admits a measure supported in $E(h) \cap S(g)$ (cf.~\cite{CurtoF}).
In such case, $y$ admits a unique finitely atomic
measure on $E(h) \cap S(g)$.
For convenience, we just call that $y$ is {\it flat} with respect to
$h=0$ and $g\geq 0$
if \reff{Lg>=0} and \reff{con:fec} are {\it both} satisfied.

For $t\leq d$ and $w\in \mathbb{R}^{\mathbb{N}^{n}_{d}}$,
denote the truncation of $w$:
\be \label{trun:z|t}
w|_t \, = \, (w_{\alpha})_{\alpha\in \mathbb{N}^{n}_{t}}.
\ee
For two tms' $y \in \re^{\N^n_{2k} }$ and $z \in \re^{\N^n_{2l} }$ with $k<l$,
we say that $y$ is a truncation of $z$
(equivalently, $z$ is an extension of $y$), if $y=z|_{2k}$.
For such case, $y$ is called a flat truncation of $z$ if $y$ is flat,
and $z$ is a flat extension of $y$ if $z$ is flat.
Flat extensions and flat truncations are very useful
in solving polynomial optimization
and truncated moment problems (cf.~\cite{Nie2,Nie,Nie1}).

\section{Properties of tensor eigenvalues}
\label{sc:pro:teig}

This section studies some properties of
standard eigenvalues and complementarity eigenvalues,
for generic tensor pairs.

\subsection{Tensor eigenvalues and eigenvectors}
\label{ssc:teigvec}

For two given tensors $\mathcal{A},\mathcal{B}\in \mathrm{T}^m(\cpx^{n})$,
a number $\lmd \in \cpx$ is called a generalized eigenvalue of the pair
$(\mA, \mB)$ if there exists a vector $x \in \cpx^n\setminus \{0\}$ such that
\begin{equation}\label{eq:eigendef}
  \mathcal{A}x^{m-1}-\lambda \mathcal{B}x^{m-1}=0.
\end{equation}
If so, such $x$ is called a generalized eigenvector, associated with $\lmd$,
of the pair $(\mA, \mB)$.
We refer to Ding and Wei \cite{DingWei15} for generalized tensor eigenvalues.
For convenience, we just call that the above $\lmd$ (resps., $x$)
is an eigenvalue (resp., eigenvector) of $(\mA,\mB)$,
and $(\lmd, x)$ is called an eigenpair.

Tensor eigenvalues are closely related to the notion of {\it resultant},
denoted as $Res$, for tuples of homogeneous polynomials.
For a tuple $f=(f_1, \ldots , f_n$) of $n$ homogeneous polynomials in
$x :=(x_1,\ldots,x_n)$, its resultant is the polynomial
$Res(f)$, in the coefficients of $f$, such that
$Res(f) = 0$ if and only if the homogeneous equation
\[ f_1(x)=\cdots=f_n(x)=0\]
has a nonzero solution in $\cpx^n$.
The $Res(f)$ is an irreducible polynomial,
and is homogeneous in the coefficients of each $f_i$.
We refer to Cox, Little and O'Shea \cite{Cox2006} for resultants.
For a tensor $\mF \in \mathrm{T}^m(\cpx^{n})$,
$\mF x^{m-1}$ is a tuple of
$n$ homogeneous polynomials of degree $m-1$.
For convenience, denote the resultant:
\be \label{df:R(F)}
R(\mF) \, := \,  Res( \mF x^{m-1} ).
\ee
Clearly, $\lmd$ is an eigenvalue of $(\mA, \mB)$ if and only if
\[
R( \mA - \lmd \mB) = 0.
\]
Note that $R( \mA - \lmd \mB)$ is a polynomial
in $\lmd$ and its degree is $n(m-1)^{n-1}$.
As in \cite{DingWei15}, $(\mA, \mB)$ is called a nonsingular tensor pair if the equation
\[
\mA x^{m-1} = \mB x^{m-1} = 0
\]
has the only zero solution.
Clearly, if $R(\mB) \neq 0$ then $(\mA, \mB)$ is nonsingular.

\begin{theorem}\label{thm:nb:eigvec}
Let $\mathcal{A},\mathcal{B}\in \mathrm{T}^m(\cpx^n)$ and $D := n (m-1)^{n-1}$.
\bit

\item [(i)] (\cite[Theorem~2.1]{DingWei15})
If $R(\mB) \neq 0$, then
$(\mA, \mB)$ has $D$ eigenvalues, counting multiplicities.

\item [(ii)] If $\mA,\mB$ are generic tensors in $\mathrm{T}^m(\cpx^n)$,
then $(\mA, \mB)$ has $D$ distinct eigenvalues. Moreover,
for each eigenvalue, there is a unique eigenvector, up to scaling.

\eit

\end{theorem}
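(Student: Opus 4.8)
We only need to establish (ii), since (i) is quoted from \cite{DingWei15}. Throughout I take as given (from the discussion preceding the theorem) that $\lmd$ is an eigenvalue of $(\mA,\mB)$ exactly when $p(\lmd):=R(\mA-\lmd\mB)=0$, a polynomial of degree $D$ in $\lmd$. The plan is to count \emph{eigenpairs} geometrically, show the count is generically $D$, and then rule out two eigenvectors sharing an eigenvalue by a dimension count; the two facts together force $D$ distinct eigenvalues each with a one-dimensional eigenspace.

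First I would recast eigenpairs as an intersection problem. Writing $\lmd=\lmd_1/\lmd_0$, the finite eigenpairs $(\lmd,[x])\in\cpx\times\P^{n-1}$ are precisely the points of $\P^1\times\P^{n-1}$ with $\lmd_0\neq 0$ solving the bihomogeneous system $\lmd_0\,\mA x^{m-1}-\lmd_1\,\mB x^{m-1}=0$ of $n$ equations of bidegree $(1,m-1)$. On the $n$-dimensional variety $\P^1\times\P^{n-1}$ the multihomogeneous Bézout number of such a system is the coefficient of the point class $\af\bt^{n-1}$ in $(\af+(m-1)\bt)^n$, where $\af,\bt$ are the hyperplane classes of the two factors; using $\af^2=0$ and $\bt^n=0$ this equals $n(m-1)^{n-1}=D$.

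Next I would transfer genericity to our structured family. The crucial observation is that the assignment $(\mA,\mB)\mapsto\big((\mA x^{m-1})_i,(\mB x^{m-1})_i\big)_{i=1}^n$ is a \emph{surjective} linear map onto all $2n$-tuples of degree-$(m-1)$ forms (distinct indices $i$ use disjoint tensor entries, and within one index every coefficient is attainable). Hence the coefficient vector of our pencil system attains generic values, and by the standard general-position statement for multihomogeneous complete intersections the generic system has exactly $D$ simple solutions. A solution with $\lmd_0=0$ would force $\mB x^{m-1}=0$ for some $x\neq 0$, i.e.\ $R(\mB)=0$; since $R$ is not the zero polynomial, $R(\mB)\neq 0$ on a dense open set, so generically there are no solutions at infinity. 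Pulling back the (nonempty, open) good locus in coefficient space through the surjective linear parametrization, I conclude that for generic $(\mA,\mB)$ there are exactly $D$ eigenpairs, all with finite $\lmd$.

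Finally I would forbid coincident eigenvectors by a dimension count on the parameter space $P=\mathrm{T}^m(\cpx^n)^2$. Consider the incidence set $\mc{D}$ of tuples $(\mA,\mB,\lmd,[x],[x'])$ with $[x]\neq[x']$ and $(\mA-\lmd\mB)x^{m-1}=(\mA-\lmd\mB)x'^{m-1}=0$, and project it to the base $(\lmd,[x],[x'])$, which has dimension $2n-1$. For $[x]\neq[x']$ the degree-$(m-1)$ monomial vectors at $x$ and $x'$ are linearly independent, which makes the $2n$ linear conditions on $(\mA,\mB)$ independent, so each fiber has dimension $\dim P-2n$; thus $\dim\mc{D}=(2n-1)+(\dim P-2n)=\dim P-1$, and its image in $P$ lies in a proper subvariety. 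Off this subvariety no eigenvalue admits two distinct eigenvectors. Combining the two counts, a generic $(\mA,\mB)$ has exactly $D$ eigenpairs on which $(\lmd,[x])\mapsto\lmd$ is injective, hence $D$ distinct eigenvalues each with a unique eigenvector up to scaling. I expect the main obstacle to be the middle step: certifying that the special pencil family $\lmd_0\mA-\lmd_1\mB$ genuinely inherits the generic multihomogeneous Bézout behavior (exactly $D$ reduced solutions, no excess components and none at infinity) rather than landing in a degenerate stratum, which is exactly what the surjectivity of the parametrization together with the condition $R(\mB)\neq 0$ are meant to secure.
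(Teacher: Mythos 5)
Your proof is correct, but it takes a genuinely different route from the paper's. For the count of $D$ distinct eigenvalues, the paper works entirely in the tensor space $\mathrm{T}^m(\cpx^n)$: it uses the irreducibility of the resultant hypersurface $\mathscr{H}=\{R(\mF)=0\}$, the fact that its singular locus $\mathscr{E}$ has codimension at least two, hence is missed by a general line $\{\mA-\lmd\mB\}$, and concludes that the intersection with $\mathscr{H}$ is transversal, i.e.\ all roots of $\phi(\lmd)=R(\mA-\lmd\mB)$ are simple. You instead count eigen\emph{pairs} on $\P^1\times\P^{n-1}$ via the multihomogeneous B\'ezout number of the bidegree-$(1,m-1)$ system, which indeed evaluates to $n(m-1)^{n-1}=D$, and you separately exclude solutions at $\lmd_0=0$ via $R(\mB)\neq 0$. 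For uniqueness of eigenvectors, the paper introduces the determinantal variety $X=\big\{[x]:\rank\bbm \mA x^{m-1} & \mB x^{m-1}\ebm<2\big\}$ and cites Propositions~A.5--A.6 of \cite{NieRan09} to get $|X|=D$, then matches the two counts; your incidence-variety dimension count --- the locus of pairs $(\mA,\mB)$ admitting one eigenvalue with two projectively distinct eigenvectors has dimension at most $\dim P-1$, because for $[x]\neq[x']$ the vectors $x^{\otimes(m-1)}$ and $x'^{\otimes(m-1)}$ are independent and the $2n$ linear conditions split over disjoint slices of the tensor --- replaces that citation with a self-contained and quite transparent argument. The trade-off: the paper's argument is shorter and directly yields simplicity of the eigenvalues as roots of the characteristic polynomial, at the cost of outsourcing the eigenvector count to \cite{NieRan09}; yours is self-contained modulo the standard general-position statement for multihomogeneous complete intersections, and your observation that $(\mA,\mB)\mapsto\big(\mA x^{m-1},\mB x^{m-1}\big)$ is a surjective linear map onto all $2n$-tuples of degree-$(m-1)$ forms is exactly the right justification for why the structured pencil inherits generic B\'ezout behavior. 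I see no gap in either step.
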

\begin{proof}
(i) This item can be found in Theorem~2.1 of Ding and Wei \cite{DingWei15}.
If $R(\mB) \neq 0$, then $(\mA, \mB)$ is a nonsingular tensor pair.

(ii) The resultant $R(\mF)$ is an irreducible polynomial in
the entries of $\mF$. The hypersurface
\[
\mathscr{H} = \{\mF  \in \mathrm{T}^m(\cpx^n): \, R(\mF) = 0 \}
\]
is irreducible in the space $\mathrm{T}^m(\cpx^n)$.
Its minimum degree defining polynomial is $R(\mF)$, with the degree $D$.
The hypersurface $\mathscr{H}$ is smooth,
except a subset $\mathscr{E} \subseteq \mathscr{H}$
whose dimension is smaller than that of $\mathscr{H}$.
For generic $\mA, \mB$, the line
\[
\mathscr{L} = \{ \mA - \lmd \mB:  \, \lmd \in \cpx \}
\]
does not intersect the set $\mathscr{E}$. That is,
$\mathscr{L}$ intersects $\mathscr{H}$ only at smooth points of $\mathscr{H}$
(i.e., the intersection is transversal).
This implies that for all $\lmd$ satisfying
\[
\phi(\lmd) := R( \mA-\lmd \mB ) = 0,
\]
we have $\phi^\prm(\lmd) \ne 0$. The roots of $\phi$ are all simple.
Therefore, $(\mA, \mB)$ has $D$ distinct eigenvalues,
when $\mA,\mB$ are generic tensors in $\mathrm{T}^m(\cpx^n)$.

Let $X$ be the determinantal projective variety
\[
X = \big\{ x\in   \P^{n-1} : \,
\rank \bbm \mA x^{m-1} & \mB x^{m-1} \ebm  < 2 \big\}.
\]
(The $\P^{n-1}$ is the projective space of equivalent classes
of vectors in $\cpx^n$.  )
Clearly, if $R(\mathcal{B}) \neq 0$,
then $(\lmd, x)$ is an eigenpair of $(\mA,\mB)$ if and only if $x \in X$.
When $\mA,\mB$ are generic, we have $R(\mB) \ne 0$,
and the set $X$ is zero-dimensional (i.e., $X$ is a finite set),
and its cardinality is equal to the number $D$.
This can be implied by Propositions~A.5, A.6 of \cite{NieRan09}.

When $\mA,\mB$ are generic tensors,
$(\mA,\mB)$ has $D$ distinct eigenvalues.
For each eigenvalue, there is at least one eigenvector.
This implies that there is a unique eigenvector up to scaling.
\end{proof}

\subsection{Combinatorial eigenvalues and eigenvectors}
\label{ssc:CBeig}

First, we give the definition of combinatorial eigenvalues
for tensor pairs. Recall the Hadamard product $\circ$
as in \S\ref{sc:prelim}.

\begin{definition}  \label{df:comb:eig}
Let $\mA, \mB \in \mathrm{T}^m(\cpx^n)$ be tensors.
If there exist a number $\lmd \in \cpx$ and a vector $x \in \cpx^n\setminus \{0\}$ such that
\be \label{cbeig:lmd:x}
x \circ ( \mA x^{m-1} - \lmd \mB x^{m-1} ) = 0,
\ee
then $\lmd$ (resp., $x$) is called a combinatorial eigenvalue
(resp., combinatorial eigenvector) of the pair $(\mA, \mB)$.
Such $(\lmd, x)$ is called a combinatorial eigenpair.
\end{definition}

For convenience of writing,  the
combinatorial eigenvalues (resp., eigenvectors, eigenpairs)
defined in \reff{cbeig:lmd:x} are called
CB-eigenvalues (resp., CB-eigenvectors, CB-eigenpairs).
In particular, C-eigenvalues (resp., C-eigenvectors, C-eigenpairs)
as in \reff{TEiCP}
are also CB-eigenvalues (resp., CB-eigenvectors, CB-eigenpairs).

For a subset $J = \{i_1, \ldots, i_k \}  \subseteq [n]$,
denote $x_J = (x_{i_1}, \ldots, x_{i_k})$.
For a tensor $\mF \in \mathrm{T}^m(\cpx^n)$,
let $\mF_J$ be the principal sub-tensor of $\mF$ corresponding to
the set $J$, i.e., $\mF_J$ is a tensor in $\mathrm{T}^m(\cpx^k)$
indexed by $(j_1, \ldots, j_m)$ such that
\[
(\mF_J)_{j_1,\ldots, j_m} = \mF_{j_1, \ldots , j_m}, \quad
j_1, \ldots, j_m \in J.
\]
Similar to $\mF x^{m-1}$, $\mF_{J} (x_J)^{m-1}$ is defined to be the
$k$-dimensional vector, indexed by $j \in J$ such that
\be \label{def:mFJ(x)}
\big( \mF_{J} (x_J)^{m-1} \big)_j  = \sum_{ i_2, \ldots, i_m \in J}
\mF_{j, i_2, \ldots, i_m} x_{i_2} \cdots x_{i_m}.
\ee
Like \reff{df:R(F)}, let $R_J(\mF)$ be the resultant of
the homogeneous tuple $\mF_{J} (x_J)^{m-1}$
\be \label{defi:RJ(mFx)}
R_J(\mF) \, : = \,  Res \big( \mF_{J} (x_J)^{m-1} \big).
\ee

When $(\mA, \mB)$ is a nonsingular tensor pair,
Ling et al.~\cite[Theorem~4.1]{Ling2015}
gave an upper bound for the number of C-eigenvalues.
We give a similar result for CB-eigenvalues.
Furthermore, we also give upper bound for the number of
CB-eigenvectors (up to scaling), for generic tensors $\mA, \mB$.
Thus, the number of C-eigenvectors (up to scaling)
can also be bounded.

\begin{theorem}\label{th:number}
Let $\mathcal{A},\mathcal{B} \in \mathrm{T}^m(\cpx^n) $.

\bit

\item [(i)] If $R_{J}(\mB) \ne 0$ for each $\emptyset \neq J \subseteq [n]$,
then $(\mA, \mB)$ has at most $n m^{n-1}$ CB-eigenvalues.

\item [(ii)] If $\mA,\mB$ are generic tensors in $\mathrm{T}^m(\cpx^n)$, then,
for each CB-eigenvalue, there is a unique CB-eigenvector (up to scaling).

\eit
\end{theorem}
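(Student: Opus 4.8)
The plan is to reduce CB-eigenpairs of $(\mA,\mB)$ to ordinary eigenpairs of the principal sub-tensor pairs $(\mA_J,\mB_J)$ and then apply Theorem~\ref{thm:nb:eigvec} to each sub-pair. First I would establish the support reduction. Suppose $(\lmd,x)$ is a CB-eigenpair and set $J:=\supp{x}\neq\emptyset$. For $i\notin J$ the $i$-th equation $x_i(\cdots)=0$ in \reff{cbeig:lmd:x} holds automatically, while for $i\in J$ it forces $(\mA x^{m-1})_i=\lmd(\mB x^{m-1})_i$. Since $x_i=0$ off $J$, only monomials with all indices in $J$ survive in those sums, so by \reff{def:mFJ(x)} the equations read
\[
\mA_J (x_J)^{m-1}-\lmd\,\mB_J (x_J)^{m-1}=0,
\]
i.e. $(\lmd,x_J)$ is an eigenpair of $(\mA_J,\mB_J)$ whose eigenvector $x_J$ has all nonzero entries. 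Conversely, any full-support eigenpair of $(\mA_J,\mB_J)$ extends by zeros off $J$ to a CB-eigenpair of support exactly $J$. Thus the CB-eigenvalues of $(\mA,\mB)$ are precisely the values $\lmd$ occurring as eigenvalues of some $(\mA_J,\mB_J)$ admitting a full-support eigenvector.

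For part (i), the hypothesis $R_J(\mB)\neq 0$ makes $(\mA_J,\mB_J)$ a nonsingular pair in $\mathrm{T}^m(\cpx^{|J|})$, so Theorem~\ref{thm:nb:eigvec}(i) bounds its eigenvalues by $D_J:=|J|(m-1)^{|J|-1}$. Since every CB-eigenvalue is an eigenvalue of at least one $(\mA_J,\mB_J)$, summing the per-support bound over nonempty $J\subseteq[n]$ and grouping by $k=|J|$ gives
\[
\sum_{k=1}^{n}\binom{n}{k}\,k\,(m-1)^{k-1}=n\,m^{n-1},
\]
the last equality being the identity $\sum_{k=1}^{n}\binom{n}{k}k\,t^{k-1}=n(1+t)^{n-1}$ evaluated at $t=m-1$. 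Hence $(\mA,\mB)$ has at most $n\,m^{n-1}$ CB-eigenvalues.

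For part (ii) I would argue by genericity in two layers. Fix a support $J$. Because $\mA,\mB$ are general, so is $(\mA_J,\mB_J)$ (the coordinate projection onto the $J$-indexed entries is surjective, so a dense open condition on the sub-pair pulls back to a dense open condition on $(\mA,\mB)$). Theorem~\ref{thm:nb:eigvec}(ii) then gives $D_J$ distinct eigenvalues for $(\mA_J,\mB_J)$, each with a unique eigenvector up to scaling; in particular a CB-eigenvector of support $J$ attached to a given $\lmd$ is unique up to scaling. It remains to exclude that a single CB-eigenvalue $\lmd$ is realized with two distinct supports $J\neq J'$. Writing $\phi_J(\lmd):=R(\mA_J-\lmd\mB_J)$, such a coincidence forces $\phi_J,\phi_{J'}$ to share a root, i.e. $\mathrm{Res}_{\lmd}(\phi_J,\phi_{J'})=0$. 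I would show this resultant is a not-identically-zero polynomial in the entries of $(\mA,\mB)$: choosing $i_0$ in exactly one of $J,J'$, say $i_0\in J\setminus J'$, the entries of $\mA,\mB$ indexed entirely within $J$ but involving $i_0$ affect the roots of $\phi_J$ while leaving $\phi_{J'}$ fixed, so perturbing them moves the finite root set of $\phi_J$ off the finite root set of $\phi_{J'}$, exhibiting a witness where the resultant does not vanish. Intersecting the (finitely many) resulting dense open sets over all pairs $J\neq J'$ with the genericity loci of the individual $(\mA_J,\mB_J)$ leaves a dense open set on which all sub-pair eigenvalues are simple and mutually distinct across supports; there each CB-eigenvalue has a single support and a unique eigenvector up to scaling.

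The main obstacle is the cross-support step: proving $\mathrm{Res}_{\lmd}(\phi_J,\phi_{J'})\not\equiv 0$, equivalently producing one tensor pair whose distinct sub-pairs have disjoint spectra. The ``free entry'' perturbation above is the clean route, but it requires checking that the roots of $\phi_J$ genuinely vary with the entries absent from $\phi_{J'}$, so that a root can be pushed off the fixed zeros of $\phi_{J'}$; ruling out degenerate directions in which no root moves is where the irreducibility of the resultant $R$ underlying Theorem~\ref{thm:nb:eigvec} is again the key tool.
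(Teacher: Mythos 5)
Your support-reduction and part (i) coincide with the paper's argument: restrict to $J=\supp{x}$, invoke Theorem~\ref{thm:nb:eigvec}(i) on each sub-pair $(\mA_J,\mB_J)$, and sum $\binom{n}{k}k(m-1)^{k-1}$ to get $nm^{n-1}$. Likewise, the within-support uniqueness in (ii) (genericity of $(\mA_J,\mB_J)$ via surjectivity of the coordinate projection, then Theorem~\ref{thm:nb:eigvec}(ii)) is exactly what the paper does. The one place you diverge is the cross-support step, and that is where your proposal has a genuine gap --- one you flag yourself, but whose suggested repair does not actually work as stated. You want to show $\mathrm{Res}_{\lmd}(\phi_J,\phi_{J'})\not\equiv 0$ by perturbing an entry of $\mA$ indexed within $J$ and involving some $i_0\in J\setminus J'$, arguing this moves the roots of $\phi_J$ while fixing $\phi_{J'}$. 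The trouble is that such a perturbation need not move the \emph{relevant} root. Concretely, at the diagonal point $\mA_{i\cdots i}=a_i$ (zero otherwise), $\mB=\mc{I}$, one has $\phi_J(\lmd)=\prod_{j\in J}(a_j-\lmd)^{(m-1)^{|J|-1}}$, so $\phi_J$ and $\phi_{J'}$ share every root $a_j$ with $j\in J\cap J'$; perturbing the $i_0$-diagonal entry moves only the root $a_{i_0}$ and leaves the shared roots untouched. So a single ``free entry'' does not furnish a witness, and producing one requires either a more global perturbation or a different argument; your closing sentence correctly identifies this as the unresolved obstacle.

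The paper closes this step without ever exhibiting a witness, by a dimension count in the tensor space. It sets
\[
V=\{\mc{C}\in\mathrm{T}^m(\cpx^n):\, R_I(\mc{C}_I)=R_J(\mc{C}_J)=0\},
\]
and observes that $R_I(\mc{C}_I)$ and $R_J(\mc{C}_J)$ are irreducible polynomials which, for $I\ne J$, depend on different sets of entries and hence are not associates; so $V$ is the intersection of two distinct irreducible hypersurfaces and has codimension at least $2$. A general line $\mathscr{L}=\{\mA-\lmd\mB:\lmd\in\cpx\}$ therefore misses $V$, which says precisely that no $\lmd$ is simultaneously an eigenvalue of $(\mA_I,\mB_I)$ and $(\mA_J,\mB_J)$ for $I\ne J$. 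This is the elimination-free version of your resultant condition: rather than proving a specific resultant in $\lmd$ is a nonzero polynomial in the tensor entries, one only needs coprimality of the two resultant hypersurfaces, which is immediate from irreducibility plus the disjointness of their variable dependence. If you want to keep your formulation, the cleanest repair is to note that $\mathrm{Res}_{\lmd}(\phi_J,\phi_{J'})=0$ defines exactly the locus of lines meeting $V$ (away from the degenerate leading-coefficient locus), and then quote the same codimension-$2$ fact; the entry-by-entry perturbation is not needed and, as the diagonal example shows, is not sufficient on its own.
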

\begin{proof}
(i) This can be done by following the approach
in the proof of Theorem~4.1 of \cite{Ling2015}.
Suppose $\lmd$ is a CB-eigenvalue,
with the CB-eigenvector $u \ne 0$ such that
\[
u \circ (\mA u^{m-1} -\lmd \mB u^{m-1} ) = 0.
\]
Let $J = \{j: u_j \ne 0 \}$, a nonempty set.
Then, the above implies that
\[
  \mathcal{A}_J (u_J)^{m-1} - \lambda \mathcal{B}_J (u_J)^{m-1} = 0.
\]
So, $\lmd$ is an eigenvalue of the sub-tensor pair $(\mA_J, \mB_J)$.
By Theorem~\ref{thm:nb:eigvec}(i), $(\mA_J, \mB_J)$ has at most
$|J| (m-1)^{|J|-1}$ eigenvalues.
By enumerating all possibilities of $J$,
the number of CB-eigenvalues of
$(\mathcal{A},\mathcal{B})$ is at most the number
{\small
\[
\sum_{|J|=1}^{n} {n \choose |J|} |J| (m-1)^{|J|-1}=n m^{n-1}.
\]
}

(ii) When $\mA,\mB$ are generic in the space $\mathrm{T}^m(\cpx^n)$,
for each $\emptyset \ne I \subseteq [n]$,
the subpair $(\mA_I, \mB_I)$ is also generic in $\mathrm{T}^m(\cpx^{|I|})$.
Hence, $(\mA_I, \mB_I)$ has a unique eigenvector (up to scaling) for each eigenvalue,
by Theorem~\ref{thm:nb:eigvec}(ii).
For each CB-eigenpair $(\lmd, u)$ of $(\mA, \mB)$,
we showed in the item (i) that $\lmd$ is an eigenvalue of the sub-tensor pair
$(\mA_J, \mB_J)$ with the eigenvector $u_J$,
with the index set $J=\{j: u_j \ne 0 \}$.

Suppose $v$ is another CB-eigenvector associated to $\lmd$.
Let $I=\{j: v_j \ne 0 \}$. Clearly, $\lmd$ is also an eigenvalue of
the sub-tensor pair $(\mA_I, \mB_I)$.
We show that $I=J$.
Define the set
\[
V = \{ \mc{C} \in \mathrm{T}^m(\cpx^n): \,
R_I(\mc{C}_I) =  R_J(\mc{C}_J) = 0 \}.
\]
The polynomial $R_I(\mc{C}_I)$ is irreducible
in the entries of the subtensor $\mc{C}_I$. The same is true for $R_J(\mc{C}_J)$.
When $I \ne J$, the dimension of the set $V$
is at most $\dim \big( \mathrm{T}^m(\cpx^n) \big)-2$.
When $\mA, \mB$ are generic tensors, the line in the space $\mathrm{T}^m(\cpx^n)$
\[
\mathscr{L} = \{ \mA - \lmd \mB: \, \lmd \in \cpx \}
\]
does not intersect $V$. Therefore, if $I \ne J$,
then $\lmd$ cannot be a common eigenvalue of the two different sub-tensor pairs
$(\mA_I, \mB_I)$ and $(\mA_J, \mB_J)$.
Hence, $I=J$ and $u_J, v_J$ are both eigenvectors of $(\mA_J, \mB_J)$.
By Theorem~\ref{thm:nb:eigvec}(ii), $u$ is a scaling of $v$.
\end{proof}

\section{TEiCPs with strict copositivity}
\label{sect:cop}

In this section, we discuss how to compute
C-eigenvalues of a tensor pair $(\mA, \mB)$ when
$\mB$ is strictly copositive. Note that
$\mB \in \mathrm{T}^m(\re^n)$ is said to be copositive (resp., strictly copositive)
if $\mathcal{B} x^m \geq 0$ (resp., $\mathcal{B} x^m > 0$)
for all $x \in \mathbb{R}^{n}_+ \setminus \{0\}$.
Recall that $(\lmd, x)$ is a C-eigenpair of $(\mA, \mB)$ if
$x$ is a nonzero vector and
\[
0 \leq x \perp ( \lambda \mB x^{m-1} - \mA x^{m-1} )\geq 0.
\]
Any positive scaling of such $x$ is also a C-eigenvector.
When $\mB$ is strictly copositive, we can always scale $x$
such that $\mB x^m = 1$.
Under this normalization, the C-eigenpair $(\lmd, x)$ satisfies
\[
0 =  x^T(  \lambda \mB x^{m-1} - \mA x^{m-1} ) =
\lambda \mB x^m  - \mA x^m = \lmd - \mA x^m.
\]
So, we get $\lmd = \mA x^m$. The C-eigenvalues of $(\mA, \mB)$
can be found by solving the polynomial system
\be \label{tgeicpcop0}
\left\{ \baray{l}
\mathcal{B}x^m=1, \, x \circ ((\mA x^m ) \mB x^{m-1} -\mA x^{m-1} )=0, \\
x\geq 0, \,  (\mA x^m ) \mB x^{m-1} -\mA x^{m-1} \geq 0,
\earay\right.
\ee
where $\circ$ denotes the Hadamard product of two vectors. If we define
\[
a(x) := x \circ \mA x^{m-1},
\quad b(x) :=  x \circ \mB x^{m-1}.
\]
Then, it clearly holds that
\[
x \circ ( \lambda \mathcal{B} x^{m-1} - \mathcal{A} x^{m-1} )=
\lmd b(x) - a(x).
\]
The polynomial system \eqref{tgeicpcop0} can be rewritten as
\be \label{tgeicpcop}
\left\{ \baray{l}
\mathcal{B}x^m=1, \, (\mA x^m ) b(x) - a(x) = 0, \\
x\geq 0, \,   (\mA x^m ) \mB x^{m-1} - \mA x^{m-1} \geq 0.
\earay\right.
\ee
When $\mB$ is strictly copositive,
the solution set of \eqref{tgeicpcop} is compact, because
$\{x\in \mathbb{R}^n: \mathcal{B}x^m=1, x\geq 0\}$ is compact.
The tensor pair $(\mA, \mB)$ has at least one C-eigenvalue
when $\mB$ (or $-\mB$) is strictly copositive
(cf.~\cite[Theorem~2.1]{Ling2015}). Moreover, under some generic
conditions on $\mB$, $(\mA, \mB)$ has finitely many C-eigenvalues
(cf.~Theorem~\ref{th:number}).
They can be ordered monotonically as
\be \label{seq:lmd}
\lambda_1 < \lambda_2 < \cdots < \lambda_N.
\ee

For convenience, denote the polynomial tuples
\be \label{df:f0:sq}
\left\{ \baray{l}
f_0 = \mA x^m, \\
 p = \Big( \mB x^m - 1,  (\mA x^m) b(x) - a(x)  \Big), \\
 q = \Big( x, \,  (\mA x^m) \mB x^{m-1} - \mA x^{m-1} \Big).
\earay \right.
\ee

\subsection{The first C-eigenvalue}

The first eigenvalue $\lambda_1$
equals the optimal value of the optimization problem
\be  \label{larg:cop}
   \left\{ \begin{array}{rl}
  \lambda_1=\min  &  f_0(x)\\
  \mbox{s.t.} & p(x) = 0, \, q(x) \geq 0.
 \end{array}\right.
\ee
We apply Lasserre type semidefinite relaxations \cite{Lasserre01}
to solve \reff{larg:cop}. For the orders $k=m, m+1, \ldots$,
the $k$-th Lasserre relaxation is
\be \label{SDPpoly1:cop}
   \left\{ \begin{array}{lcl}
  \nu_{1,k} := & \min   & \langle f_0,y \rangle\\
  &\mbox{s.t.} &  \langle 1, y \rangle=1, \, L^{(k)}_{p} (y) = 0, \\
  &            &  M_k(y)\succeq 0, \, L^{(k)}_{q} (y) \succeq 0, \,
   y \in \mathbb{R}^{\mathbb{N}^{n}_{2k}}.
 \end{array}\right.
\ee
In the above, $\langle 1, y\rangle=1$ means that
the first entry of $y$ is one, and the matrices
$M_k(y)$, $L^{(k)}_{p} (y)$, $L^{(k)}_{q} (y)$ are defined
as in \reff{moment:mat}-\reff{mat:locliz}.
Its dual problem is
 \be \label{SDPdual1:cop}
   \left\{ \begin{array}{lcl}
  \tilde \nu_{1,k} := &\max & \gamma \\
  &\mbox{s.t.} & f_{0} - \gamma \in I_{2k}(p)+Q_{k}(q).
 \end{array}\right.
\ee
Suppose $y^{1,k}$ is an optimizer of \eqref{SDPpoly1:cop}.
If, for some $t \in [m,k]$, the truncation
$\hat y=y^{1,k}|_{2t}$ (see \reff{trun:z|t}) satisfies
\be \label{rankconal}
\rank \, M_{t-m}(\hat y) \, = \, \rank\, M_{t} (\hat y),
\ee
then $\nu_{1,k}=\lambda_1$
and we can get $\rank\, M_{t} (\hat y)$ global optimizers
of \eqref{larg:cop} (cf. \cite{Nie2}).

\subsection{The second and other eigenvalues}

We discuss how to compute $\lmd_i$ for $i=2,\ldots, N$.
Suppose $\lambda_{i-1}$ is already computed.
We need to determine the next C-eigenvalue $\lambda_i$.
Consider the optimization problem
\begin{equation}\label{N1larg2:cop}
   \left\{ \begin{array}{cl}
 \min   &  f_0(x)\\
  \mbox{s.t.} & p(x)=0, \,
    q(x)\geq 0,\, f_0(x) - \lambda_{i-1} - \delta \geq 0.
 \end{array}\right.
\end{equation}
The optimal value of \eqref{N1larg2:cop} is equal to $\lambda_i$ if
\begin{equation}\label{del2:cop}
  0< \delta < \lambda_i -\lambda_{i-1}.
\end{equation}
Similarly, Lasserre type semidefinite relaxations can be applied to solve
\eqref{N1larg2:cop}. For the orders $k=m, m+1, \ldots$,
the $k$-th Lasserre relaxation is
\be \label{SDPpolyN2:cop}
   \left\{ \begin{array}{lcl}
 \nu_{i,k} :=& \min & \langle f_0 ,z \rangle\\
  &\mbox{s.t.} &  \langle 1,z \rangle=1,  L^{(k)}_{p} (z) = 0, M_k(z)\succeq 0, \\
  &            & L^{(k)}_{q} (z) \succeq 0,
  L^{(k)}_{f_0-\lmd_{i-1} - \dt} (z) \succeq 0,
   z \in \mathbb{R}^{\mathbb{N}^{n}_{2k}}.
 \end{array}\right.
\ee
The dual problem of \eqref{SDPpolyN2:cop} is
\be \label{SDPdualN2:cop}
   \left\{ \begin{array}{lcl}
  \tilde \nu_{i,k}  :=  &\max  & \gamma  \\
  &\mbox{s.t.} & f_{0} - \gamma \in I_{2k}(p)+Q_{k}(q, f_0-\lambda_{i-1}-\delta).
 \end{array}\right.
\ee
Suppose $y^{i,k}$ is an optimizer of \eqref{SDPpolyN2:cop}.
If a truncation $\hat y=y^{i,k}|_{2t}$
satisfies \reff{rankconal} for some $t \in [m,k]$,
then $\nu_{i,k}=\lambda_i$ and we can get optimizers of
\eqref{N1larg2:cop} (cf. \cite{Nie2}).

In practice, the existence of $\lambda_i$ is usually not known in advance.
Even if it exists, its value is typically not available.
So, we need to determine the value of
$\delta$ satisfying \eqref{del2:cop}.
Consider the polynomial optimization problem:
\be \label{k1clarg:cop}
   \left\{ \begin{array}{lcl}
  \tau: =&\max  & f_0(x)\\
  &\mbox{s.t.} & p(x)=0, \,  q(x)\geq 0,\, f_{0}(x) \leq \lambda_{i-1} + \delta.
 \end{array}\right.
\ee
Its optimal value $\tau$ can be computed by Lasserre
relaxations like \eqref{SDPpolyN2:cop}-\eqref{SDPdualN2:cop}.
As in Proposition~\ref{pro:cop:dlta}, $\delta$ satisfies \eqref{del2:cop}
if and only if $\tau = \lmd_{i-1}$. When $\tau = \lmd_{i-1}$,
$\lmd_i$ does not exist if and only if \eqref{SDPpolyN2:cop}
is infeasible for some $k$.

\subsection{An algorithm for computing C-eigenvalues}

Assume that the tensor $\mB$ is strictly copositive.
So, the C-eigenvectors can be normalized as $\mB x^m = 1$.
We propose an algorithm to compute the C-eigenvalues sequentially,
from the smallest one $\lmd_1$ to the biggest one $\lmd_N$.
Since $\mB$ is strictly copositive, $\lmd_1$ always exists \cite{Ling2015}.
We assume there are finitely many C-eigenvalues.

First, we compute $\lambda_1$ by solving semidefinite relaxations
\eqref{SDPpoly1:cop}-\eqref{SDPdual1:cop}.
After getting $\lambda_1$, we solve
\eqref{SDPpolyN2:cop}-\eqref{SDPdualN2:cop} for $\lmd_2$.
If $\lambda_2$ does not exist, then $\lambda_1$ is the
biggest C-eigenvalue and we stop;
otherwise, we continue to determine $\lambda_3$.
Repeating this procedure, we can get all the C-eigenvalues of $(\mA,\mB)$.

\balg \label{Algorithmcop}
For two tensors $\mA,\mB \in \mathrm{T}^m(\re^n)$
with $\mB$ strictly copositive,
compute a set $\Lambda$ of all C-eigenvalues
and a set $U$ of C-eigenvectors, for the pair $(\mA,\mB)$.
Let $U := \emptyset$, $\Lambda := \emptyset$, $i :=1$,  $k:=m$.

\bit

\item [Step~1.] Solve \reff{SDPpoly1:cop} with the order $k$
for an optimizer $y^{1,k}$.

\item [Step~2.] If \reff{rankconal} is satisfied
for some $t \in [m,k]$, then update $U := U \, \cup \, S$,
with $S$ a set of optimizers of \reff{larg:cop};
let $\lmd_1 = \nu_{1,k}$, $\Lambda := \{ \lmd_1 \}$,
$i:=i+1$ and go to Step~3.
If such $t$ does not exist, let $k:=k+1$ and go to Step~1.

\item [Step~3.] Let $\dt = 0.05$, and compute the optimal value
$\tau$ of \eqref{k1clarg:cop}.
If $\tau >  \lmd_{i-1}$, let $\delta:=  \delta/2$
and compute $\tau$ again.
Repeat this, until we get $\tau = \lmd_{i-1}$.
Let $k:=m$.

\item [Step~4.]  Solve \reff{SDPpolyN2:cop} with the order $k$.
If it is infeasible, then \eqref{tgeicpcop}
has no further C-eigenvalues, and stop.
Otherwise, compute an optimizer $y^{i,k}$ for \reff{SDPpolyN2:cop}.

\item [Step~5.] If \reff{rankconal} is satisfied for some
$t\in [m,k]$, then update $U := U \, \cup \, S$
where $S$ is a set of optimizers of \reff{N1larg2:cop};
let $\lmd_i = \nu_{i,k}$, $\Lambda := \Lambda \cup \{ \lmd_i \}$,
$i:=i+1$ and go to Step~3.
If such $t$ does not exist, let $k:=k+1$ and go to Step~4.

\eit

\ealg

The semidefinite relaxation \eqref{SDPpoly1:cop} can be
solved by the software {\tt GloptiPoly~3} \cite{HenrionJJ}
and {\tt SeDuMi} \cite{Sturm}.
When \eqref{rankconal} holds, it can be shown that
$\lambda_{i,k}=\lambda_i$, and we can get a set of optimizers
of \reff{larg:cop}, \reff{N1larg2:cop}.
Such optimizers are the associated eigenvectors for
the C-eigenvalue $\lmd_i$. In Steps~2 and 5,
the method in Henrion and Lasserre \cite{HenrionJ}
can be used to compute the set $S$.

\subsection{Properties of relaxations}

First, we discuss when Algorithm~\ref{Algorithmcop}
has finite convergence. For the polynomial tuple $p$, denote the sets
\be \label{V:CR:(p)}
V_{\cpx}(p) := \{ u \in \cpx^n \, \mid \, p(u) = 0 \}, \quad
V_{\re}(p) := V_{\cpx}(p) \cap \re^n.
\ee

\begin{theorem}\label{conlarg1:cop}
Let $\mathcal{A},\mathcal{B}\in \mathrm{T}^m(\mathbb{R}^{n})$.
Suppose $\mathcal{B}$ is strictly copositive.
Then, we have the properties:

\begin{enumerate}

\item[(i)] The smallest C-eigenvalue $\lambda_1$ of $(\mA, \mB)$
always exists. Moreover, if the set $V_{\re}(p)$ is finite,
then for all $k$ sufficiently large,
\[
 \nu_{1,k}= \tilde \nu_{1,k}= \lambda_1
\]
and the condition \reff{rankconal} must be satisfied.

\item[(ii)] For $i\geq 2$, suppose $\lmd_i$ exists and
$0 < \dt < \lmd_i - \lmd_{i-1}$.
If the set $V_{\re}(p)$ is finite,
then for all $k$ sufficiently large,
   \[
  \nu_{i,k}= \tilde \nu_{i,k}= \lambda_i
   \]
and the condition \reff{rankconal} must be satisfied.

\end{enumerate}
\end{theorem}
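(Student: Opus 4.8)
The plan is to derive both statements from the finite-convergence theory of Lasserre's hierarchy for problems whose equality constraints cut out a finite real variety, after first disposing of the existence claim by a compactness argument. To see that $\lambda_1$ always exists, note that since $\mB$ is strictly copositive the set $\{x\in\re^n : x\geq 0,\ \mB x^m = 1\}$ is compact, and the feasible set $E(p)\cap S(q)$ of \reff{larg:cop} is a closed subset of it, hence compact. It is also nonempty, because a pair with $\mB$ strictly copositive has at least one C-eigenpair (cf.\ \cite{Ling2015}), which can be normalized to satisfy $\mB x^m = 1$. As $f_0 = \mA x^m$ is continuous, its minimum $\lambda_1$ over this nonempty compact set is attained; this step needs neither finiteness of $V_\re(p)$ nor finiteness of the C-eigenvalue set.

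For the convergence part of (i), the crucial observation is that the feasible set is contained in $V_\re(p)$, which is finite by hypothesis. First I would obtain the dual certificate: because $f_0 - \lambda_1 \geq 0$ on the finite set $V_\re(p)\cap S(q)$, the finite-real-variety representation theory yields $f_0 - \lambda_1 \in I_{2k}(p) + Q_k(q)$ at some finite order, and hence for all larger $k$. This makes $\gamma = \lambda_1$ feasible for \reff{SDPdual1:cop}, so $\tilde{\nu}_{1,k}\geq\lambda_1$; together with the weak-duality chain $\tilde{\nu}_{1,k}\leq\nu_{1,k}\leq\lambda_1$ (the last inequality because the moment vector of the Dirac measure at a minimizer is feasible for \reff{SDPpoly1:cop}), all three quantities equal $\lambda_1$. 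For the rank condition \reff{rankconal}, I would invoke that once finite convergence holds and the minimizer set is finite — which it is, being contained in the finite set $V_\re(p)$ — every optimizer $y^{1,k}$ admits a flat truncation for $k$ large (cf.\ \cite{Nie2}), and this is exactly \reff{rankconal}.

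For (ii), problem \reff{N1larg2:cop} is obtained from \reff{larg:cop} by appending only the inequality $f_0 - \lambda_{i-1} - \delta \geq 0$, which enters the quadratic module in \reff{SDPpolyN2:cop}--\reff{SDPdualN2:cop}; the equality tuple $p$ is unchanged, so $V_\re(p)$ remains finite. Assuming $\lambda_i$ exists, the hypothesis $0 < \delta < \lambda_i - \lambda_{i-1}$ ensures this inequality discards exactly the C-eigenvalues $\leq \lambda_{i-1}$ and retains those $\geq \lambda_i$, so the minimum of \reff{N1larg2:cop} equals $\lambda_i$. Repeating the argument of the previous paragraph verbatim — the certificate $f_0 - \lambda_i \in I_{2k}(p) + Q_k(q,\, f_0 - \lambda_{i-1} - \delta)$, weak duality, and flat truncation — then yields $\nu_{i,k} = \tilde{\nu}_{i,k} = \lambda_i$ and \reff{rankconal} for all $k$ large.

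The step I expect to be the main obstacle is the representation $f_0 - \lambda_1 \in I_{2k}(p) + Q_k(q)$ at a finite order. Putinar's Positivstellensatz recalled in Section~\ref{sc:prelim} applies only to strictly positive objectives, whereas $f_0 - \lambda_1$ vanishes at the minimizers; the finiteness of $V_\re(p)$ is precisely what repairs this, since on a zero-dimensional real variety the truncated quadratic module modulo $I(p)$ stabilizes and eventually contains every polynomial nonnegative on $V_\re(p)\cap S(q)$, with no separate archimedean assumption required (strict copositivity supplies compactness in any case). Making this rigorous, and simultaneously upgrading value convergence to the flat truncation \reff{rankconal} rather than mere equality of optimal values, is where the proof does its real work and where I would rely on the finite-real-variety convergence results (cf.\ \cite{Nie2,Nie3}).
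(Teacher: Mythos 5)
Your proposal is correct and follows essentially the same route as the paper: existence of $\lambda_1$ from \cite{Ling2015} (with the compactness of $\{x \geq 0,\ \mB x^m = 1\}$ making the minimum attained), reduction of the convergence claims to the finite-real-variety results for Lasserre's hierarchy, and flat truncation from the same circle of theorems (the paper cites Proposition~4.6 of \cite{LLR08} and Theorem~1.1 of \cite{Nie-rvarPOP}, where you cite \cite{Nie2,Nie3}; these carry the same content here). The extra detail you supply on the dual certificate and the weak-duality chain is exactly what those cited results encapsulate, so there is no gap.
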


\begin{proof}
(i) Since $\mB$ is strictly copositive, $(\mA, \mB)$
has at least one C-eigenvalue (cf.~\cite[Theorem~2.1]{Ling2015}).
So, $\lmd_1$ always exists.
If $V_{\re}(p)$ is finite, the equation $p(x)=0$
has finitely many real solutions. Thus, when the relaxation order $k$
is sufficiently large, we must have
$\nu_{1,k}= \tilde \nu_{1,k}= \lambda_1$
and the flat truncation condition \reff{rankconal}
must be satisfied. This can be implied by
Proposition~4.6 of \cite{LLR08} and Theorem~1.1 of \cite{Nie-rvarPOP}.

(ii) If $0 < \dt < \lmd_i - \lmd_{i-1}$ holds,
the optimal value of \reff{N1larg2:cop} is equal to $\lmd_i$.
When $V_{\re}(p)$ is finite, the equation $p(x)=0$
has finitely many real solutions. The conclusion can be implied by
Proposition~4.6 of \cite{LLR08} and Theorem~1.1 of \cite{Nie-rvarPOP}.
\end{proof}

\begin{remark}
In Theorem~\ref{conlarg1:cop}, if $V_{\re}(p)$ is not a finite set,
$\nu_{1,k}$ and $\tilde{\nu}_{1,k}$ may not have finite convergence to $\lmd_1$,
but the asymptotic convergence can be established.
When $\mB$ is strictly copositive, the set
$\{x\in \mathbb{R}^n : \mB x^m =1, x \geq 0 \}$
is compact, say, contained in the ball
$\{ x\in \mathbb{R}^n : M-x^Tx \geq 0\}$, where $M>0$ is a sufficiently large number.
If we add $M-x^Tx$ to the polynomial tuple $q$,
then $\nu_{1,k}$ and $\tilde{\nu}_{1,k}$ have asymptotic convergence to $\lmd_1$.
This is because such $Q(q)$ is archimedean,
and the asymptotic convergence can be implied by the results in \cite{Lasserre01}.
\end{remark}

However, interestingly, the set $V_{\re}(p)$ is finite
for generic tensors $\mA,\mB$.

\begin{prop}
Let $p$ be as in \reff{df:f0:sq}.
If $\mA,\mB$ are generic tensors, then
$V_{\cpx}(p)$ and $V_{\re}(p)$ are finite sets.
\end{prop}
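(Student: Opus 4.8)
The plan is to identify $V_{\cpx}(p)$ with a set of suitably normalized CB-eigenvectors of $(\mA,\mB)$ and then read off finiteness from Theorem~\ref{th:number}. Unwinding the definitions of $a(x)$ and $b(x)$ in \reff{df:f0:sq}, the second block of the tuple $p$ is the equation
\[
x\circ\big((\mA x^m)\,\mB x^{m-1}-\mA x^{m-1}\big)=0,
\]
which is exactly the CB-eigenvector equation \reff{cbeig:lmd:x} with $\lmd:=\mA x^m$. Thus every $x\in V_{\cpx}(p)$ is a CB-eigenvector of $(\mA,\mB)$ whose CB-eigenvalue is $\mA x^m$, subject in addition to the normalization $\mB x^m=1$ coming from the first entry of $p$.

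Next I would verify that this normalization loses no information and contributes only finitely many scalings. Summing the componentwise identities $x_i(\mA x^{m-1})_i=\lmd\,x_i(\mB x^{m-1})_i$ over $i$ yields $\mA x^m=\lmd\,\mB x^m$; on $V_{\cpx}(p)$ we have $\mB x^m=1$, so $\lmd=\mA x^m$ holds automatically and no constraint is lost. Since the CB-eigenvector equation is homogeneous, for any CB-eigenvector $x$ and scalar $t$ the vector $tx$ is again a CB-eigenvector with the same CB-eigenvalue, while $\mB(tx)^m=t^m\,\mB x^m$. Hence a fixed CB-eigenvector direction with $\mB x^m\neq 0$ meets the normalization $\mB x^m=1$ in exactly the $m$ points $tx$ with $t^m=1/\mB x^m$, and a direction with $\mB x^m=0$ contributes no point of $V_{\cpx}(p)$. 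Distinct directions give disjoint families of such points, so $|V_{\cpx}(p)|$ is at most $m$ times the number of CB-eigenvector directions of $(\mA,\mB)$.

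It remains to bound the number of CB-eigenvector directions, and here I would invoke Theorem~\ref{th:number}. For general tensors $R_J(\mB)\neq 0$ holds for every $\emptyset\neq J\subseteq[n]$, so part~(i) gives at most $nm^{n-1}$ CB-eigenvalues, and part~(ii) gives a unique CB-eigenvector, up to scaling, for each CB-eigenvalue. Therefore there are at most $nm^{n-1}$ CB-eigenvector directions, and consequently $|V_{\cpx}(p)|\le nm^{n}$ is finite. Finiteness of $V_{\re}(p)=V_{\cpx}(p)\cap\re^n$ is then immediate, since $V_{\re}(p)\subseteq V_{\cpx}(p)$.

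Given Theorem~\ref{th:number}, the remaining difficulty is essentially bookkeeping rather than a new idea. The points to be careful about are: that no solution of $p(x)=0$ escapes the CB-eigenvector description (immediate from the algebra above); that the normalization $\mB x^m=1$ contributes only the $m$-fold scaling ambiguity, and nothing when $\mB x^m=0$; and that the genericity hypotheses required for both parts of Theorem~\ref{th:number} — the nonvanishing of every principal resultant $R_J(\mB)$ for part~(i), together with the generic position of $\mA,\mB$ used in part~(ii) — are simultaneously available for general tensors.
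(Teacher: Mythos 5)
Your proof is correct and follows essentially the same route as the paper: identify the solutions of $p(x)=0$ with suitably normalized CB-eigenvectors of $(\mA,\mB)$ and then invoke Theorem~\ref{th:number} for general tensors. You spell out the bookkeeping more explicitly than the paper does (the $m$-fold scaling ambiguity under the normalization $\mB x^m=1$, and the fact that both parts of Theorem~\ref{th:number} are needed), but the underlying argument is the same.
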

\begin{proof}
The equation $p(x)=0$ implies that
\[
\mB x^m = 1, \quad a(x) - (\mA x^m) b(x)
= x \circ ( \mA x^{m-1} - (\mA x^m) \mB x^{m-1}  ) = 0.
\]
So, $x$ must be a nonzero vector.
Let $J=\{j: x_j \ne 0 \}$. Then we get
\[
\mA_J (x_J)^{m-1} - (\mA x^m) \mB_J (x_J)^{m-1} =0.
\]
Hence, $x_J$ is an eigenvector of the sub-tensor pair
$(\mA_J, \mB_J)$.
When $\mA,\mB$ are generic, such $x$ must be finitely many,
by Theorem~\ref{th:number}(ii).
The conclusion holds over the complex field.
So, $V_{\cpx}(p)$, as well as $V_{\re}(p)$, is finite, for generic $\mA, \mB$.
\end{proof}

The existence of $\lambda_i$ and the relation \eqref{del2:cop} can be checked as follows.

\begin{proposition}  \label{pro:cop:dlta}
Let $\mA,\mB \in \mathrm{T}^m(\mathbb{R}^{n})$.
Suppose $\mathcal{B}$ is strictly copositive.
Let $\Lambda$ be the set of all C-eigenvalues of $(\mA,\mB)$.
Assume $\Lambda$ is finite.
Let $\lambda_i$ be the $i$-th smallest C-eigenvalue of $(\mA, \mB)$,
and $\lmd_{max}$ be the maximum of them.
For all $i\geq 2$ and all $\delta > 0$, we have the following properties:
\bnum

\item [(i)] If \reff{SDPpolyN2:cop} is infeasible for some $k$, then
$\Lambda \cap [\lmd_{i-1}+ \dt, \infty) = \emptyset$.

\item [(ii)] If $\Lambda \cap [\lmd_{i-1}+ \dt, \infty) = \emptyset$
and $V_{\re}(p)$ is finite, then
\reff{SDPpolyN2:cop} must be infeasible for some $k$.

\item[(iii)] If $\tau = \lambda_{i-1}$ and $\lambda_i$ exists,
then $\dt$ satisfies \eqref{del2:cop}.

\item[(iv)] If $\tau = \lambda_{i-1}$ and \eqref{SDPpolyN2:cop}
is infeasible for some $k$, then $\lambda_i$ does not exist.

\enum
\end{proposition}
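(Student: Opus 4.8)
The plan is to translate everything into the language of C-eigenvalues and then split the four claims into three elementary ones and one genuinely analytic one. The key dictionary is this: since $\mB$ is strictly copositive, every real point $x$ with $p(x)=0$ and $q(x)\ge 0$ is a C-eigenvector normalized by $\mB x^m=1$, and for such $x$ the objective satisfies $f_0(x)=\mA x^m=\lambda$, the associated C-eigenvalue. Consequently the feasible set of \reff{N1larg2:cop} is exactly the set of C-eigenvectors $x$ with $f_0(x)\ge \lambda_{i-1}+\delta$, and the optimal value of \reff{k1clarg:cop} is $\tau=\max\{\lambda\in\Lambda:\lambda\le\lambda_{i-1}+\delta\}$. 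With this in hand, (iii) is immediate: because $\delta>0$, the value $\lambda_{i-1}$ is feasible for \reff{k1clarg:cop}, so $\tau\ge\lambda_{i-1}$, and $\tau=\lambda_{i-1}$ says no C-eigenvalue lies in $(\lambda_{i-1},\lambda_{i-1}+\delta]$; since $\lambda_i$ is by definition the smallest C-eigenvalue exceeding $\lambda_{i-1}$, its existence forces $\lambda_i>\lambda_{i-1}+\delta$, which is \reff{del2:cop}.

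For (i) I would use the standard fact that feasibility of the polynomial program forces feasibility of every moment relaxation: if a C-eigenvalue $\lambda\ge\lambda_{i-1}+\delta$ existed with eigenvector $x^\ast$, then the moment vector $z^\ast$ with entries $z^\ast_\alpha=(x^\ast)^\alpha$ satisfies all constraints of \reff{SDPpolyN2:cop} for every $k$ (its moment and localizing matrices are outer products scaled by nonnegative values of $q$ and of $f_0-\lambda_{i-1}-\delta$), so \reff{SDPpolyN2:cop} would be feasible for all $k$. The contrapositive is precisely (i). Part (iv) then follows by combining (i) with (iii): infeasibility of \reff{SDPpolyN2:cop} for some $k$ gives $\Lambda\cap[\lambda_{i-1}+\delta,\infty)=\emptyset$ by (i), while if $\lambda_i$ existed then $\tau=\lambda_{i-1}$ would put $\lambda_i>\lambda_{i-1}+\delta$ as in (iii), placing $\lambda_i$ in that empty intersection, a contradiction.

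The substantive part is (ii). Here $\Lambda\cap[\lambda_{i-1}+\delta,\infty)=\emptyset$ means every C-eigenvalue is strictly below $\lambda_{i-1}+\delta$, so through the dictionary above the feasible set of \reff{N1larg2:cop} is empty. I would then argue that the moment relaxation \reff{SDPpolyN2:cop} certifies this emptiness at a \emph{finite} order. Strict copositivity makes $\{x:\mB x^m=1,\,x\ge 0\}$ compact, so $E(p)\cap S(q)$ is compact; together with the finiteness of $V_{\re}(p)$ this is exactly the regime of finite convergence used in the proof of Theorem~\ref{conlarg1:cop}, via Proposition~4.6 of \cite{LLR08} and Theorem~1.1 of \cite{Nie-rvarPOP}. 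The consequence I would extract is that, since no real point of $V_{\re}(p)$ simultaneously satisfies $q\ge 0$ and $f_0-\lambda_{i-1}-\delta\ge 0$, the constant $-1$ admits a representation in the truncated set $I_{2k}(p)+Q_k(q,\,f_0-\lambda_{i-1}-\delta)$ for some finite $k$. Pairing such a representation against any putative feasible $z$ of \reff{SDPpolyN2:cop} gives $-1=\langle -1,z\rangle\ge 0$, because $L^{(k)}_p(z)=0$ kills the ideal part and $M_k(z)\succeq 0$, $L^{(k)}_q(z)\succeq 0$, $L^{(k)}_{f_0-\lambda_{i-1}-\delta}(z)\succeq 0$ make the quadratic-module part nonnegative; this contradiction shows \reff{SDPpolyN2:cop} is infeasible at that $k$.

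The main obstacle is exactly this last step: producing a \emph{finite-order} infeasibility certificate rather than mere asymptotic detection. Compactness alone yields only an archimedean quadratic module, and only after adjoining a redundant ball constraint $M-\|x\|^2$ to $q$, which would give infeasibility only in the limit; it is the finiteness of $V_{\re}(p)$ that upgrades this to termination at a finite order, by reducing the representation problem to the finite real-variety setting where the hierarchy stabilizes. I would therefore structure the proof of (ii) so that the finiteness of $V_{\re}(p)$ is invoked in precisely the same way as in Theorem~\ref{conlarg1:cop}, treating the empty feasible set as the degenerate instance of that finite-convergence statement.
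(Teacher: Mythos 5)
Your proposal is correct and follows essentially the same route as the paper: (i) via feasibility of the moment vector of any eigenvector with eigenvalue at least $\lambda_{i-1}+\delta$, (ii) via a finite-order certificate of emptiness (the paper makes this explicit by combining the Positivstellensatz with the archimedeanness of $I(p)$ forced by finiteness of $V_{\re}(p)$ and then Putinar, concluding infeasibility by weak duality, which is the same mechanism as your pairing argument), and (iii), (iv) by the same elementary reasoning about $\tau$ being the largest C-eigenvalue not exceeding $\lambda_{i-1}+\delta$.
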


\begin{proof}

Since $\mB$ is strictly copositive, every C-eigenvector $x$ can be scaled such that
$\mB x^m = 1$.

(i) Note that, for every eigenpair $(\lmd, u)$ of $(\mA, \mB)$ with
$\lmd \geq \lmd_{i-1}+\dt$, the tms $[u]_{2k}$
(see the notation in \S\ref{sc:prelim})
is always feasible for \reff{SDPpolyN2:cop}.
If \reff{SDPpolyN2:cop} is infeasible for some $k$, then
$(\mA,\mB)$ clearly has no C-eigenvalues $\geq \lmd_{i-1}+ \dt$.

(ii) Suppose $(\mA,\mB)$ has no C-eigenvalues $\geq \lmd_{i-1}+ \dt$
and $V_{\re}(p)$ is finite.
The feasible set of \reff{N1larg2:cop} is empty.
By the Positivstellensatz (cf.~\cite[Theorem~4.4.2]{BCR}), we have
\[
-2 =  \phi + \psi, \quad
\phi \in I(p), \, \psi \in Pr(q, f_0-\lmd_{i-1}-\dt),
\]
where $Pr(q, f_0-\lmd_{i-1}-\dt)$ denotes the preodering generated by the tuple
$(q, f_0-\lmd_{i-1}-\dt)$.
(We refer to \cite{BCR} for preorderings.)
Since $V_{\re}(p)$ is finite, the ideal $I(p)$ is archimedean.
(This is because $-\|p\|^2$ belongs to $I(p)$
and the set $\{x \in \re^n: -\| p \|^2 \geq 0 \}$ is compact.)
So, $I(p)+Q(q, f_0-\lmd_{i-1}-\dt)$ is also archimedean.
Note that $1+\psi$ is strictly positive on
$\{x\in \mathbb{R}^n: p=0, q\geq 0, f_0-\lmd_{i-1}-\dt\geq 0\}$.
By Putinar's Positivstellensatz (cf.~\cite{Putinar}),
$ 1 + \psi \in I(p)+Q(q, f_0-\lmd_{i-1}-\dt)$.
Thus, we get
\[
-1 =  \phi + \sig, \quad
\phi \in I_{2k}(p), \, \sig \in Q_k(q, f_0-\lmd_{i-1}-\dt)
\]
where $\sig = 1 + \psi$ and $k$ is sufficiently large.
This implies that \reff{SDPdualN2:cop} has an improving direction
and it is unbounded from the above.
By weak duality, the relaxation
\reff{SDPpolyN2:cop} must be infeasible, for $k$ big enough.

(iii) If $\tau = \lambda_{i-1}$, then the maximum C-eigenvalue,
which is less than or equal to $\lmd_{i-1}+\dt$,
is still $\lmd_{i-1}$. So, if $\lmd_i$ exists,
we must have $\lmd_i > \lmd_{i-1} + \dt$, i.e.,
 \eqref{del2:cop} is satisfied.

(iv) When \eqref{SDPpolyN2:cop} is infeasible for some $k$,
$(\mA, \mB)$ has no C-eigenvalues $\geq \lmd_{i-1}+\dt$.
So, if $\tau = \lambda_{i-1}$, $\lmd_{i-1}$
is the maximum C-eigenvalue, and $\lmd_i$ does not exist.
\end{proof}

\section{Solving general TEiCPs}
\label{sc:gTEiCP}

In this section, we discuss how to compute complementarity eigenvalues
of $(\mA, \mB)$ for generic tensors
$\mathcal{A},\mathcal{B}\in \mathrm{T}^m(\mathbb{R}^{n})$. Recall that
$\lmd$ is a C-eigenvalue of $(\mA,\mB)$ if there exists a nonzero vector
$x \in \re^n$ such that
\[
0\leq x \perp  (\lambda \mB x^{m-1} - \mA x^{m-1} ) \geq 0.
\]

\subsection{Polynomial optimization reformulations}
As in \S\ref{sect:cop}, we still denote
\[
a(x) := x \circ \mA x^{m-1}, \quad b(x) :=  x \circ \mB x^{m-1}.
\]
If we normalize $x$ to have unit length, then
$(\lmd, x)$ is a C-eigenpair of $(\mA,\mB)$ if and only if
it is a solution of the polynomial system
\be \label{tgeicpvec}
\left\{ \baray{l}
  x^Tx=1,\,\,     \lmd b(x) - a(x) =0,\\
   x\geq 0, \ \  \lambda \mB x^{m-1} - \mA x^{m-1} \geq 0.
\earay \right.
\ee
When $b(x) \neq 0$, the equation $a(x) - \lambda b(x)=0$
holds if and only if
\[
\rank \, \bbm a(x) &  b(x) \ebm \leq 1,
\]
which is equivalent to that
\begin{align}\label{eq3.6}
a(x)_i b(x)_j - b(x)_i a(x)_j =0 \quad
(1\leq i < j \leq n).
\end{align}
Suppose \reff{tgeicpvec} has finitely many real solutions.
For a generic vector $\xi \in \mathbb{R}^{n}$,
we have $\xi^T b(x)\neq 0$ for all $x$ satisfying \reff{tgeicpvec} and
\be \label{eigval}
\lambda= \frac{\xi^T a(x)}{\xi^T b(x)}.
\ee
The C-eigenvalues of $(\mA,\mB)$ can be computed in two cases.

\textbf{Case I: $\xi^T b(x)>0$.}
In this case, the system \eqref{tgeicpvec} is equivalent to
\be \label{case1}
\left\{ \begin{array}{l}
x^T x=1,    a(x)_i b(x)_j - b(x)_i a(x)_j =0 \, (1\leq i < j \leq n), \\
x \geq 0, \, \xi^T b(x)\geq 0, \,
\big( \xi^T a(x) \mathcal{B}x^{m-1} - \xi^T b(x) \mathcal{A}x^{m-1} \big) \geq 0.
 \end{array}\right.
\ee
Note that \reff{case1} does not use $\lmd$ directly.
For generic $(\mA,\mB)$, \eqref{case1} has finitely many solutions.
Once a solution $x$ is found, the C-eigenvalue $\lmd$ can be computed by \eqref{eigval}.
The system \eqref{case1} can be solved as a polynomial optimization problem.
Generate a random polynomial $f(x)\in \mathbb{R}[x]_{2m}$.
Consider the optimization problem
\be \label{larg:vec2}
\left\{ \begin{array}{cl}
  \min   &  f(x)\\
  \mbox{s.t.} & h(x)=0, \,\, g(x)\geq 0,
\end{array}\right.
\ee
where the polynomial tuples $h,g$ are given as
\be \label{def:gh:case1}
\left\{ \baray{l}
h(x) =\Big( x^T x-1, \, \big(
a(x)_i b(x)_j - b(x)_i a(x)_j \big)_{1\leq i < j \leq n} \Big),  \\
g(x)= \Big(x, \, \xi^T b(x), \,
\xi^T a(x) \mathcal{B}x^{m-1} - \xi^T b(x) \mathcal{A}x^{m-1} \Big).
\earay \right.
\ee
Clearly, $x$ satisfies \reff{case1} if and only if
$x$ is feasible for \reff{larg:vec2}.

\textbf{Case II: $\xi^T b(x)<0$.}
In this case, the system \reff{tgeicpvec} is equivalent to
\be \label{case2}
\left\{ \begin{array}{l}
x^T x=1,    a(x)_i b(x)_j - b(x)_i a(x)_j =0 \, (1\leq i < j \leq n), \\
x \geq 0, \, -\xi^T b(x)\geq 0, \,
\xi^T b(x) \mathcal{A}x^{m-1} \, - \, \xi^T a(x) \mathcal{B}x^{m-1} \geq 0.
 \end{array}\right.
\ee
Like \reff{case1}, the system \reff{case2} does not use $\lmd$ directly.
Once a point $x$ satisfying \reff{case2} is obtained,
the C-eigenvalue $\lmd$ can be obtained by \reff{eigval}.
Clearly, $x$ satisfies \reff{case2}
if and only if it is feasible for the optimization problem
\be \label{larg:vec2-2}
   \left\{ \begin{array}{cl}
  \min  &  f(x)\\
  \mbox{s.t.} & h(x)=0,\, \, \tilde g(x)\geq 0,
 \end{array}\right.
\ee
where $h$ is the same as in \reff{def:gh:case1} while
the tuple $\tilde g$ is given as
\be \label{tldg:case2}
\tilde g(x)= \Big(x, \, -\xi^T b(x), \,
\xi^T b(x) \mathcal{A}x^{m-1}- \xi^T a(x) \mathcal{B}x^{m-1} \Big).
\ee
The feasible sets of \eqref{larg:vec2} and \reff{larg:vec2-2}
are compact, since they are contained in the unit sphere.
However, they are possibly empty.

The C-eigenpairs $(\lmd,x)$
satisfying \reff{tgeicpvec} can be found
by computing feasible points of the optimization problems
\reff{larg:vec2} and \reff{larg:vec2-2}.
When the number of C-eigenvectors (normalized to have unit lengths)
is finite, we can compute all the feasible points of
\reff{larg:vec2} and \reff{larg:vec2-2}.
In the following subsections, we show how to do this.

\subsection{Compute C-eigenvectors}

Assume that there are finitely many
C-eigenvectors (normalized to have unit lengths) for the tensor pair $(\mA,\mB)$.
We propose an algorithm for computing all of them.

\subsubsection{C-eigenpairs for case I} \quad

We discuss how to compute the C-eigenvectors satisfying \reff{case1}.
Assume the feasible set of \reff{larg:vec2} is nonempty and finite.
When it is generically chosen, $f$ achieves different
values at different feasible points of \reff{larg:vec2}, say,
they are monotonically ordered as
\be \label{seq:f(1):i=1:N1}
f^{(1)}_1  < f^{(1)}_2   < \cdots < f^{(1)}_{N_1}.
\ee
We aim to compute the C-eigenvectors,
along with the values $f^{(1)}_i$,
in the order $i=1,\ldots, N_1$.
Choose a number $\ell_i$ such that
\be \label{cho:ell-i:caseI}
 f^{(1)}_{i-1} < \ell_i < f^{(1)}_i.
\ee
(For the case $i=1$, $f^{(1)}_0$ can be chosen to be
any value smaller than $f^{(1)}_1$.)
Note that $f^{(1)}_i$ is equal to the optimal value of
\be \label{min:fhg:i-th}
   \left\{ \begin{array}{cl}
  \min   &  f(x)\\
  \mbox{s.t.} & h(x)=0,\, \,  g(x)\geq 0, \,\, f(x) - \ell_i \geq 0.
 \end{array}\right.
\ee
We apply Lasserre type semidefinite relaxations to solve \reff{min:fhg:i-th}.
For the orders $k = m, m+1, \ldots$, the $k$-th Lasserre relaxation is
\be \label{min<f,z>:mom:I}
   \left\{ \begin{array}{lcl}
 \mu_{1,k}  :=  &\min    & \langle f,y \rangle\\
  & \mbox{s.t.} &  \langle 1, y\rangle=1,  \, L^{(k)}_{h} (y) = 0, \,  M_k(y)\succeq 0, \\
  &            &    L^{(k)}_{g} (y) \succeq 0,
  L^{(k)}_{f-\ell_i} (y) \succeq 0,  \, y \in \mathbb{R}^{\mathbb{N}^{n}_{2k}}.
 \end{array}\right.
 \ee
(See \S\ref{ssc:momloc} for the notation in the above.)
The dual problem of \reff{min<f,z>:mom:I} is
 \begin{equation}  \label{max:gm:sos:I}
   \left\{ \begin{array}{lcl}
  \tilde{\mu}_{1,k}  := &  \max  & \gamma \\
  &\mbox{s.t.} & f - \gamma \in I_{2k}(h)+Q_{k}(g, f-\ell_i),
 \end{array}\right.
\end{equation}
where $I_{2k}(h)$ and $Q_{k}(g, f-\ell_i)$ are defined as in
\eqref{Ik}-\eqref{Qk}.
By weak duality, it can be shown that (cf.~\cite{Lasserre01})
\begin{align}\label{weakd}
\tilde{\mu}_{1,k} \, \leq \, \mu_{1,k} \, \leq \, f^{(1)}_i, \quad  \forall \, k\geq m.
\end{align}
Moreover, both $\{\mu_{1,k}\}$ and $\{\tilde{\mu}_{1,k}\}$
are monotonically increasing.

When \eqref{case1} has a solution, the semidefinite relaxation \reff{min<f,z>:mom:I}
is always feasible. Suppose $y^{i,k}$ is an optimizer of \reff{min<f,z>:mom:I}.
If for some $t \in [m, k]$, the truncation $\hat y :=y^{i,k}|_{2t}$
satisfies the rank condition
\be \label{flat:trun:caseI}
\rank \, M_{t-m}(\hat y) \, = \,\rank \, M_{t} (\hat y),
\ee
then one can show that $\mu_{1,k}=\tilde{\mu}_{1,k}=f^{(1)}_i$
and we can get $\rank\, M_{t} (\hat y)$ optimizers
of \eqref{min:fhg:i-th} (cf.~\cite{Nie2}).
The method in \cite{HenrionJ} can be applied to compute the
minimizers of \reff{min:fhg:i-th}.
Interestingly, we will show that the rank condition
\reff{flat:trun:caseI} must be satisfied,
for generic tensors $\mA, \mB$
(cf.~Theorem~\ref{thm:infeas:ficvg}).

\subsubsection{C-eigenpairs for case II} \quad

Now we show how to find the C-eigenvectors satisfying \reff{case2}.
The computation is similar to the case I.
Assume the feasible set of \reff{larg:vec2-2} is nonempty and finite.
Order its objective values monotonically as
\be \label{sq:f(2):1:N2}
f^{(2)}_1  < f^{(2)}_2   < \cdots < f^{(2)}_{N_2}.
\ee
We compute the C-eigenvectors and the value $f^{(2)}_i$
in the order $i=1,\ldots, N_2$.
Choose a number $\tilde \ell_i$ such that
\be \label{val:ell-i:c2}
 f^{(2)}_{i-1} < \tilde \ell_i < f^{(2)}_i.
\ee
(For $i=1$, choose $f^{(2)}_0 $ to be any value
smaller than $f^{(2)}_1$.)
Note that $f^{(2)}_i$ is equal to the minimum value of
\be \label{minf:tdg:no:i}
  \left\{ \begin{array}{cl}
  \min  &  f(x)\\
  \mbox{s.t.} & h(x)=0,\, \,  \tilde g(x)\geq 0, \,\, f(x) - \tilde \ell_i \geq 0.
 \end{array}\right.
\ee
For an order $k \geq m$, the $k$-th Lasserre relaxation
(cf.~\cite{Lasserre01}) for solving \reff{minf:tdg:no:i} is
\begin{equation}\label{mfz:mom:c2}
   \left\{ \begin{array}{lcl}
 \mu_{2,k} :=  &\min   & \langle f,z \rangle\\
  & \mbox{s.t.} &  \langle 1,z \rangle=1,  L^{(k)}_{h} (z) = 0, \, M_k(z)\succeq 0, \\
  &            &    L^{(k)}_{\tilde g} (z) \succeq 0, \,
  L^{(k)}_{f- \tilde \ell_i} (z) \succeq 0, \,
  z \in \mathbb{R}^{\mathbb{N}^{n}_{2k}}.
 \end{array}\right.
 \end{equation}
Its dual optimization problem is
 \begin{equation}  \label{mgm:sos:c2}
   \left\{ \begin{array}{lcl}
  \tilde{\mu}_{2,k}  :=  &  \max  & \gamma \\
  &\mbox{s.t.} & f - \gamma \in I_{2k}(h)+Q_{k}(\tilde g, f- \tilde \ell_i).
 \end{array}\right.
\end{equation}

Suppose $z^{i,k}$ is an optimizer of \reff{mfz:mom:c2}.
If for some $t \in [m, k]$, the truncation $\hat z :=z^{i,k}|_{2t}$
satisfies the rank condition
\be \label{ftcd:cas2}
\rank \, M_{t-m}(\hat z) \, = \,\rank \, M_{t} (\hat z),
\ee
then $\mu_{2,k}=\tilde{\mu}_{2,k}=f^{(2)}_i$
and we can get $\rank \, M_{t} (\hat z)$ optimizers of
\eqref{minf:tdg:no:i} (cf.~\cite{Nie2}).
We will show that the condition \reff{ftcd:cas2} must be satisfied
for generic tensors $\mA, \mB$
(cf.~Theorem~\ref{thm:infeas:ficvg}).

\subsubsection{An algorithm for computing C-eigenpairs} \quad

In practice, the $f, \ell_i, \tilde \ell_i$ need to be chosen properly.
We propose to choose $f$ in the form as
\be \label{f=sos:RR}
f = [x]_m^T (R^TR) [x]_m,
\ee
where $R$ is a random square matrix.
For $f$ as in \reff{f=sos:RR}, we almost always have
\[
f^{(1)}_1 > 0, \quad f^{(2)}_1 >0.
\]
Thus, we can choose
\be \label{eq:-1:0}
f^{(1)}_0 = f^{(2)}_0  = -1, \quad  \ell_1 = \tilde \ell_1 = 0.
\ee

In the computation of $f^{(1)}_i, f^{(2)}_i$, suppose
the values of $f^{(1)}_{i-1}, f^{(2)}_{i-1}$ are already computed.
In practice, for $\dt > 0$ small enough, we can choose
\[
\ell_i =  f^{(1)}_{i-1} + \dt, \qquad
\tilde \ell_i =  f^{(2)}_{i-1} + \dt,
\]
to satisfy \reff{cho:ell-i:caseI} and \reff{val:ell-i:c2}.
Such value of $\dt$ can be determined
by solving the following maximization problems:
\be \label{k1clarg:vec}
\left\{ \begin{array}{lcl}
\theta_1 = & \max   & f(x)\\
  &\mbox{s.t.} & h(x)=0,\, g(x)\geq 0, \, f(x) \leq f^{(1)}_{i-1} +  \delta,
 \end{array}\right.
\ee
\be  \label{theta2:cas2}
\left\{ \begin{array}{lcl}
\theta_2 =  & \max   & f(x)\\
  &\mbox{s.t.} & h(x)=0,\, \tilde g(x)\geq 0, \, f(x) \leq f^{(2)}_{i-1} +  \delta.
 \end{array}\right.
\ee
Their optimal values can be computed by Lasserre type semidefinite relaxations.
When $h(x)=0$ has finitely many real solutions,
we must have $\theta_1 = f^{(1)}_{i-1}$ and $\theta_2 = f^{(2)}_{i-1}$,
for $\dt>0$ sufficiently small.
For such case, the relations \reff{cho:ell-i:caseI}
and \reff{val:ell-i:c2} will be satisfied.
This is justified by Lemma~\ref{lm:cho:dt:gen}.

Note that $f$ achieves only finitely many values
in the feasible sets of \reff{larg:vec2}, \reff{larg:vec2-2},
when $(\mA, \mB)$ has finitely many normalized C-eigenvectors.

\balg
\label{Algorithmgen}
For two given tensors $\mA, \mB \in \mathrm{T}^m(\mathbb{R}^{n})$,
compute a set $\Lambda$ of C-eigenvalues,
and a set $U$ of C-eigenvectors, for the pair $(\mA,\mB)$.

\bit

\item [Step~0.] Choose $f$ as in \reff{f=sos:RR},
with $R$ a random square matrix.
Choose a random vector $\xi \in \re^n$.
Let $U = \emptyset$, $i=1$, $k=m$, $\ell_1 = 0$, $\tilde \ell_1 =0$.

\item [Step~1.] Solve \reff{min<f,z>:mom:I} for the order $k$.
If it is infeasible, then \eqref{case1} has no
further C-eigenvectors (except those in $U$);
let $k=m$, $i=1$ and go to Step~4.
Otherwise, compute an optimizer $y^{i,k}$ for \reff{min<f,z>:mom:I}.

\item [Step~2.] If \reff{flat:trun:caseI} is satisfied
for some $t \in [m,k]$, then update $U := U \, \cup \, S$,
where $S$ is a set of optimizers of \reff{min:fhg:i-th};
let $i:=i+1$ and go to Step~3.
If such $t$ does not exist, let $k:=k+1$ and go to Step~1.

\item [Step~3.] Let $\dt = 0.05$, and compute the optimal value
$\theta_1$ of \eqref{k1clarg:vec}.
If $\theta_1 >  f^{(1)}_{i-1}$, let $\delta:=  \delta/2$
and compute $\theta_1$ again.
Repeat this process, until $\theta_1 = f^{(1)}_{i-1}$ is met.
Let $\ell_i := f^{(1)}_{i-1} + \dt$, $k=m$,
then go to Step~1.

\item [Step~4.] Solve \reff{mfz:mom:c2} for the order $k$.
If it is infeasible, then \eqref{case2} has no
further C-eigenvectors (except those in $U$) and go to Step~7.
Otherwise, compute an optimizer $z^{i,k}$ for it.

\item [Step~5.] Check whether or not \reff{ftcd:cas2}
is satisfied for some $t \in [m,k]$.
If yes, update $U := U \, \cup \,S$,
where $S$ is a set of optimizers of \reff{minf:tdg:no:i};
let $i:=i+1$ and go to Step~6.
If no, let $k:=k+1$ and go to Step~4.

\item [Step~6.] Let $\dt = 0.05$, and compute the optimal value
$\theta_2$ of \reff{theta2:cas2}.
If $\theta_2 >  f^{(2)}_{i-1}$, let $\delta:=  \delta/2$
and compute $\theta_2$ again.
Repeat this process, until we get $\theta_2 = f^{(2)}_{i-1}$.
Let $\tilde \ell_i = f^{(2)}_{i-1} + \dt$, $k=m$,
and go to Step~4.

\item [Step~7.] Let
$\Lambda := \{ \xi^T a(u) / \xi^T b(u): \, u \in U \}.$

\eit
\ealg

The Lasserre type semidefinite relaxations
\reff{min<f,z>:mom:I} and \reff{mfz:mom:c2} can be
solved by the software
GloptiPoly 3 \cite{HenrionJJ} and SeDuMi \cite{Sturm}.
In Step~2 and Step~5, the method in Henrion and Lasserre \cite{HenrionJ}
can be used to compute optimizers of \reff{min:fhg:i-th}.
The same is true for \reff{minf:tdg:no:i}
and its Lasserre relaxation \reff{mfz:mom:c2}.

\subsection{Properties of the relaxations}

First, we prove that Algorithm~\ref{Algorithmgen}
converges in finitely many steps for generic tensors $\mA,\mB$.
Let $T_1, T_2$ be the feasible sets of
\reff{min:fhg:i-th} and \reff{minf:tdg:no:i}, respectively.
Let $V_{\re}(h)$ be defined as in \reff{V:CR:(p)}.

\begin{theorem}  \label{thm:infeas:ficvg}
Let $\mathcal{A},\mathcal{B}\in \mathrm{T}^m(\mathbb{R}^{n})$
be two tensors. Let $h, g, \tilde{g}$ be the polynomial tuples
as in \reff{def:gh:case1}, \reff{tldg:case2},
constructed from $\mA, \mB$ and a vector $\xi \in \re^n$.
Then, for all $\ell_i, \tilde \ell_i$
satisfying \reff{cho:ell-i:caseI} and \reff{val:ell-i:c2},
we have the following properties:
 \begin{enumerate}

\item[(i)]  The relaxation \reff{min<f,z>:mom:I}
is infeasible for some order $k$ if and only if
the feasible set $T_1$ of \reff{min:fhg:i-th} is empty.

\item[(ii)] Suppose $T_1 \ne \emptyset$.
If $V_{\re}(h)$ is a finite set, then
for $k$ sufficiently large, the rank condition
\reff{flat:trun:caseI} must be satisfied and
\[
\mu_{1,k} = \tilde{\mu}_{1,k} =  f^{(1)}_i.
\]

\item[(iii)]  The relaxation \reff{mfz:mom:c2}
is infeasible for some order $k$ if and only if
the feasible set $T_2$ of \reff{minf:tdg:no:i} is empty.

\item[(iv)] Suppose $T_2 \ne \emptyset$.
If $V_{\re}(h)$ is a finite set, then
for $k$ sufficiently large, the rank condition
\reff{ftcd:cas2} must be satisfied and
\[
\mu_{2,k} = \tilde{\mu}_{2,k} =  f^{(2)}_i.
\]

\end{enumerate}
\end{theorem}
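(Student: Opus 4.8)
The plan is to prove (i) and (iii) by one argument and (ii) and (iv) by another, since within each pair the statements are symmetric: (iii) is obtained from (i) by replacing the tuple $g$ of \reff{def:gh:case1} with the tuple $\tilde g$ of \reff{tldg:case2} and $\ell_i$ with $\tilde\ell_i$, and (iv) is obtained from (ii) the same way. The point I would exploit throughout is that both $g$ and $\tilde g$ are built from the \emph{same} equality tuple $h$, whose first entry is $x^Tx-1$.

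For (i) I would first record the easy implication: if $T_1\neq\emptyset$, then \reff{min<f,z>:mom:I} is feasible for every $k$. Indeed, for any $u\in T_1$ the truncated moment vector $y=[u]_{2k}$ of the Dirac measure at $u$ satisfies $\langle 1,y\rangle=1$, $M_k(y)=[u]_k[u]_k^T\succeq 0$, $L^{(k)}_h(y)=0$ (since $h(u)=0$), and $L^{(k)}_g(y)\succeq 0$, $L^{(k)}_{f-\ell_i}(y)\succeq 0$ (since $g(u)\geq 0$ and $f(u)\geq\ell_i$); the contrapositive gives one direction of (i). For the reverse direction, suppose $T_1=\emptyset$. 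The key observation is that archimedeanness is \emph{free} here: because $x^Tx-1\in I(h)$, we have $N-\|x\|^2=(N-1)+(1-x^Tx)\in I(h)+Q(g,f-\ell_i)$ for every $N\geq 1$, so the module $I(h)+Q(g,f-\ell_i)$ is archimedean with no genericity or finiteness hypothesis. Since the basic closed set cut out by $h=0,\,g\geq 0,\,f-\ell_i\geq 0$ is empty, the constant $-1$ is vacuously positive on it, and Putinar's Positivstellensatz \cite{Putinar} yields $-1\in I(h)+Q(g,f-\ell_i)$, hence $-1\in I_{2k}(h)+Q_k(g,f-\ell_i)$ for some finite $k$. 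Writing $-1=\phi+\sigma$ with $\phi\in I_{2k}(h)$ and $\sigma\in Q_k(g,f-\ell_i)$, and pairing with any candidate feasible $y$ of \reff{min<f,z>:mom:I}, I get $-1=\langle-1,y\rangle=\langle\phi,y\rangle+\langle\sigma,y\rangle=\langle\sigma,y\rangle\geq 0$, a contradiction; here $\langle\phi,y\rangle=0$ because $L^{(k)}_h(y)=0$, and $\langle\sigma,y\rangle\geq 0$ because $M_k(y)\succeq 0$, $L^{(k)}_g(y)\succeq 0$, $L^{(k)}_{f-\ell_i}(y)\succeq 0$. Thus the relaxation is infeasible at that order. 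Part (iii) follows verbatim with $\tilde g,\tilde\ell_i$ in place of $g,\ell_i$.

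For (ii) and (iv) I would reduce to the finite-convergence machinery already invoked for Theorem~\ref{conlarg1:cop}. Assuming $T_1\neq\emptyset$ and $V_{\re}(h)$ finite, the choice \reff{cho:ell-i:caseI} of $\ell_i$ forces $T_1$ to be exactly the set of feasible points of \reff{larg:vec2} whose $f$-value is at least $f^{(1)}_i$, so $\min_{T_1}f=f^{(1)}_i$. Since $h=0$ already confines the feasible set to the finite set $V_{\re}(h)$, Proposition~4.6 of \cite{LLR08} and Theorem~1.1 of \cite{Nie-rvarPOP} apply and give finite convergence $\mu_{1,k}=\tilde\mu_{1,k}=f^{(1)}_i$ together with the flat truncation condition \reff{flat:trun:caseI} at an optimizer $y^{i,k}$, for all large $k$. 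Part (iv) is identical with $\tilde g,\tilde\ell_i,f^{(2)}_i$ and \reff{ftcd:cas2}.

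I expect the main obstacle to be the reverse implication of (i)/(iii), namely producing the infeasibility certificate. The subtlety worth getting right---and the reason (i)/(iii) require \emph{no} genericity assumption, in contrast to (ii)/(iv)---is that the normalization $x^Tx=1$ sitting inside $h$ already makes the quadratic module archimedean, so Putinar's theorem can be applied directly on the empty set. Everything else is either the one-line Dirac-measure construction or a direct appeal to the cited finite-variety convergence theorems.
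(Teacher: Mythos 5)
Your proof is correct and follows essentially the same route as the paper's: the Dirac-measure construction $y=[u]_{2k}$ for the easy direction of (i)/(iii), archimedeanness obtained for free from $x^Tx-1\in I(h)$ together with a Putinar-type certificate of infeasibility for the converse, and the finite-real-variety results of \cite{LLR08} and \cite{Nie-rvarPOP} for (ii)/(iv). The only difference is minor and harmless: the paper first invokes the general Positivstellensatz \cite{BCR} to write $-2=\phi+\psi$ with $\psi$ in the preordering and then applies Putinar's theorem to $1+\psi$ (strictly positive on the constraint set), and concludes via dual unboundedness plus weak duality, whereas you apply Putinar's theorem \cite{Putinar} directly to the constant $-1$ (vacuously positive on the empty set) and rule out any feasible $y$ by pairing it with the certificate --- both arguments are valid.
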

\begin{proof}
(i) ``only if" direction: If the relaxation \reff{min<f,z>:mom:I}
is infeasible for some order $k$, then the feasible set of
\reff{min:fhg:i-th} must be empty. This is because, if otherwise
\reff{min:fhg:i-th} has a feasible point, say, $u$,
then the tms $[u]_{2k}$ (see the notation in \S\ref{sc:prelim})
generated by $u$
must be feasible for \reff{min<f,z>:mom:I}.

``if" direction: Since $T_1 = \emptyset$,
by the Positivstellensatz (cf.~\cite[Theorem~4.4.2]{BCR}), we have
\[
-2 = \phi + \psi, \quad \phi \in I(h), \, \psi \in Pr(g, f-\ell_i).
\]
Here, $Pr(g, f-\ell_i)$ is the preordering of the tuple $(g, f-\ell_i)$.
(We refer to \cite{BCR} for preorderings.)
Note that the sum $1+\psi$ is strictly positive
on $\{x\in \mathbb{R}^n: h = 0, g\geq 0,  f-\ell_i \geq 0\}$.
The ideal $I(h)$ is archimedean, because $1-\|x\|^2 \in I(h)$.
So, $I(h) + Q(g, f-\ell_i)$ is also archimedean.
By Putinar's Positivstellensatz, $1 + \psi \in I(h) + Q(g, f-\ell_i)$.
This implies that
\[
-1 = \phi + \sig, \quad \phi \in I_{2k}(h), \, \sig \in Q_k(g, f-\ell_i),
\]
where $\sig = 1 + \psi$ and $k$ is sufficiently large.
So, the dual optimization problem \reff{max:gm:sos:I}
has an improving direction and it is unbounded from the above.
By weak duality, the optimization \reff{min<f,z>:mom:I}
must be infeasible.

(ii) When the set $V_{\re}(h)$ is finite,  the Lasserre's hierarchy
\reff{min<f,z>:mom:I}-\reff{max:gm:sos:I}
must have finite convergence, and the condition
\reff{flat:trun:caseI} must be satisfied,
when $k$ is sufficiently large.
This can be implied by Theorem~1.1 of \cite{Nie-rvarPOP}
and Proposition~4.6 of \cite{LLR08}.

(iii)-(iv):  These two items can be proved exactly in the same way
as for (i)-(ii). The proof is omitted here, for cleanness of the paper.
\end{proof}

\begin{remark}
In Theorem~\ref{thm:infeas:ficvg}(ii), (iv), if $V_{\re}(h)$ is not finite,
then we can only get the asymptotic convergence
\[
\lim_{ k \to \infty} \mu_{1,k} = \lim_{ k \to \infty} \tilde{\mu}_{1,k} =  f^{(1)}_i,
\qquad
\lim_{ k \to \infty} \mu_{2,k} = \lim_{ k \to \infty} \tilde{\mu}_{2,k} =  f^{(2)}_i.
\]
This is because $V_{\re}(h)$ is contained in the unit sphere
$\{x\in \mathbb{R}^n : x^Tx=1\}$ and the ideal $I(h)$ is archimedean.
The asymptotic convergence can be implied by \cite{Lasserre01}.
However, the set $V_{\re}(h)$ is finite for generic tensors
$\mA,\mB$, as shown below.
\end{remark}

\begin{prop} \label{V(h):finite}
Let $h$ be as in \reff{def:gh:case1}.
If $\mA, \mB$ are generic tensors,
then $V_{\cpx}(h)$ and $V_{\re}(h)$ are finite sets.
\end{prop}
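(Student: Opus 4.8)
The plan is to mirror the argument used for the analogous finiteness result in Section~\ref{sect:cop}, reducing the parallelism encoded in $h$ to an ordinary eigenvalue equation on a principal subtensor pair. First I would rewrite the off-diagonal equations $a(x)_i b(x)_j - b(x)_i a(x)_j = 0$ in terms of $\mA x^{m-1}$ and $\mB x^{m-1}$. Since $a(x)_i = x_i (\mA x^{m-1})_i$ and $b(x)_i = x_i (\mB x^{m-1})_i$, each such expression equals $x_i x_j \big[ (\mA x^{m-1})_i (\mB x^{m-1})_j - (\mB x^{m-1})_i (\mA x^{m-1})_j \big]$. Hence for any $x \in V_{\cpx}(h)$, writing $J = \{ j : x_j \ne 0 \}$ (which is nonempty, since $x^Tx = 1$ forces $x \ne 0$), all these equations hold automatically for index pairs meeting the complement of $J$, while on $J$ they state exactly that the vectors $(\mA x^{m-1})_J$ and $(\mB x^{m-1})_J$ are linearly dependent.

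Next I would observe that, because $x_i = 0$ for $i \notin J$, the restricted vectors coincide with subtensor evaluations: $(\mA x^{m-1})_J = \mA_J (x_J)^{m-1}$ and $(\mB x^{m-1})_J = \mB_J (x_J)^{m-1}$, with $x_J$ having all nonzero entries. For general $\mB$ one has $R_J(\mB) \ne 0$ for every nonempty $J$, since $R_J(\mB)$ is a nonzero polynomial in the entries of $\mB$ and is therefore nonzero on a Zariski-open dense set; consequently the homogeneous system $\mB_J (x_J)^{m-1} = 0$ admits no nonzero solution, so in particular $\mB_J (x_J)^{m-1} \ne 0$. Combined with the linear dependence, this produces a scalar $\lmd \in \cpx$ with $\mA_J (x_J)^{m-1} = \lmd \, \mB_J (x_J)^{m-1}$; that is, $x_J$ is an eigenvector of the principal subtensor pair $(\mA_J, \mB_J)$.

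Then I would invoke the finiteness already established for subtensor pairs. When $\mA, \mB$ are generic, every subpair $(\mA_J, \mB_J)$ is itself generic in $\mathrm{T}^m(\cpx^{|J|})$ (as noted in the proof of Theorem~\ref{th:number}(ii)), so by Theorem~\ref{thm:nb:eigvec}(ii) it has finitely many eigenvalues, each with a unique eigenvector up to scaling. Thus there are only finitely many eigenvector directions for each $J$, and the normalization $x^Tx = 1$ (equivalently $x_J^T x_J = 1$) selects at most two points from each direction. Summing over the $2^n - 1$ nonempty subsets $J \subseteq [n]$ shows that $V_{\cpx}(h)$ is a finite set, and $V_{\re}(h) \subseteq V_{\cpx}(h)$ is finite as well.

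The only delicate point is the genericity bookkeeping: one must ensure that all the required conditions — $R_J(\mB) \ne 0$ together with the uniqueness statement of Theorem~\ref{thm:nb:eigvec}(ii) for the generic subpair $(\mA_J,\mB_J)$ — hold simultaneously for every nonempty $J$. This causes no difficulty, because each is a Zariski-open dense condition on $\mathrm{T}^m(\cpx^n) \times \mathrm{T}^m(\cpx^n)$ and a finite intersection of open dense sets is again open dense. I expect the main conceptual step, rather than a genuine obstacle, to be recognizing that the rank-one condition collapses to an eigenvalue equation precisely on the support $J$, which is exactly what lets the already-proved finiteness theorems apply.
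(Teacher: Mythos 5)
Your proposal is correct and follows essentially the same route as the paper: both reduce $h(x)=0$ on the support $J=\{j: x_j\neq 0\}$ to the statement that $x_J$ is a generalized eigenvector of the principal subtensor pair $(\mA_J,\mB_J)$ (using $R_J(\mB)\neq 0$ to rule out $\mB_J(x_J)^{m-1}=0$), and then invoke the generic finiteness of such eigenvectors. The only cosmetic difference is that the paper phrases the intermediate step as ``$x$ is a CB-eigenvector'' and cites Theorem~\ref{th:number} as a black box, whereas you unroll that reduction explicitly and sum over the subsets $J$ yourself.
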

\begin{proof}
By the construction of $h$ as in \reff{def:gh:case1},
$h(x)=0$ if and only if
\be \label{|x|=1:rk[ab]=1}
x^Tx - 1 =0, \quad \rank \, \bbm a(x) & b(x) \ebm \leq 1.
\ee
Let
$J = \{ j : \, x_j \ne 0 \}$.
We claim that $b(x) \ne 0$. Suppose otherwise $b(x)=0$, then
\[
\mB_J  (x_J)^{m-1} = 0.
\]
(See \S\ref{ssc:CBeig} for the notation $\mB_J$.)
Since $x_J$ is a nonzero vector, we get $R_J(\mB) = 0$.
This is impossible, when $\mB$ is a generic tensor.
Thus, in \reff{|x|=1:rk[ab]=1}, $b(x) \ne 0$
and there exists $\lmd$ such that
\[
a(x) - \lmd b(x) = 0.
\]
Thus, we get that
\[
x \circ ( \mA x^{m-1} - \lmd \mB x^{m-1} ) = a(x) - \lmd b(x) = 0.
\]
This implies that $x$ is a C-eigenvector, associated to $\lmd$.
By Theorem~\ref{th:number}, there are finitely many
normalized C-eigenvetors, when $\mA, \mB$ are generic.
Therefore, $h(x)=0$ has finitely many complex solutions,
for generic $\mA,\mB$. So, both $V_{\cpx}(h)$ and $V_{\re}(h)$ are finite.
\end{proof}

\begin{prop} \label{pro:T1T2:epty}
Let $T_1$ (resp., $T_2$) be the feasible set of
\reff{min:fhg:i-th} (resp., \reff{minf:tdg:no:i}).
For all $\xi \in \re^n$, we have the properties:
\bit

\item [(i)] If $T_1 = \emptyset$, then
there is no C-eigenvector $x$ satisfying
\reff{case1} and $f(x) \geq \ell_i$.

\item [(ii)] If $T_2 = \emptyset$, then
there is no C-eigenvector $x$ satisfying
\reff{case2} and $f(x) \geq \tilde \ell_i$.

\item [(iii)] For the case $i=1$, if $T_1 = \emptyset$ then
the set \reff{case1} is empty;
if $T_2 = \emptyset$ then
the set \reff{case2} is empty.
Thus, if $T_1 = T_2 = \emptyset$,
then the pair $(\mA, \mB)$ has no C-eigenpairs.

\eit
\end{prop}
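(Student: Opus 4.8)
The plan is to read parts (i) and (ii) directly off the construction of $T_1$ and $T_2$, and to obtain (iii) by combining the special choice $\ell_1=\tilde\ell_1=0$ with the nonnegativity of the objective $f$ and the reformulation of C-eigenpairs as solutions of \reff{case1} or \reff{case2} that was set up before the statement.

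For part (i), I would first recall that by \reff{def:gh:case1} the conditions $h(x)=0$ together with $g(x)\ge 0$ are precisely the system \reff{case1}, and that the feasible set $T_1$ of \reff{min:fhg:i-th} is obtained from \reff{case1} by appending the single inequality $f(x)-\ell_i\ge 0$. Hence any C-eigenvector $x$ that satisfies \reff{case1} and also has $f(x)\ge \ell_i$ would be a point of $T_1$; since $T_1=\emptyset$, no such $x$ exists. Part (ii) is identical word for word, with $g,\ell_i,T_1$ and \reff{case1} replaced by $\tilde g,\tilde\ell_i,T_2$ and \reff{case2}.

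For part (iii), for the case $i=1$ I would use that $\ell_1=\tilde\ell_1=0$ by \reff{eq:-1:0} and that $f=[x]_m^T(R^TR)[x]_m$ from \reff{f=sos:RR} is a sum of squares, hence $f(x)\ge 0$ everywhere. Consequently the appended constraint $f(x)\ge 0$ is vacuous, so $T_1$ coincides with the solution set of \reff{case1} and $T_2$ with that of \reff{case2}. Thus $T_1=\emptyset$ forces \reff{case1} to be empty, and likewise $T_2=\emptyset$ forces \reff{case2} to be empty.

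The remaining and main step is to show that if both \reff{case1} and \reff{case2} have no solution then $(\mA,\mB)$ has no C-eigenpair at all. I would take an arbitrary C-eigenpair $(\lmd,x)$ and rescale $x$ to unit length; this is legitimate because the defining relation \reff{TEiCP} is homogeneous in $x$, so positive scalings of a C-eigenvector are again C-eigenvectors. The normalized $x$ then solves \reff{tgeicpvec}, and the complementarity condition $x\circ(\lmd\mB x^{m-1}-\mA x^{m-1})=0$ reads $a(x)=\lmd\, b(x)$. In particular $\bbm a(x) & b(x) \ebm$ has rank at most one, so all $2\times 2$ minors vanish and the equations \reff{eq3.6} hold; moreover $\xi^T a(x)=\lmd\,\xi^T b(x)$ for every $\xi$. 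Using this I would rewrite $\xi^T a(x)\,\mB x^{m-1}-\xi^T b(x)\,\mA x^{m-1}=\xi^T b(x)\,(\lmd\mB x^{m-1}-\mA x^{m-1})$, and since $\lmd\mB x^{m-1}-\mA x^{m-1}\ge 0$ the sign of $\xi^T b(x)$ decides the case: if $\xi^T b(x)\ge 0$ the vector above is $\ge 0$ and $x$ satisfies \reff{case1}, while if $\xi^T b(x)\le 0$ the analogous product $-\xi^T b(x)\,(\lmd\mB x^{m-1}-\mA x^{m-1})$ is $\ge 0$ and $x$ satisfies \reff{case2}. As every real number is $\ge 0$ or $\le 0$, the vector $x$ solves at least one of the two systems, contradicting their emptiness. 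I expect this last reduction to be the only substantive point; notably it works for every $\xi$, so no genericity of $\xi$ is required here, and the sign split is precisely what makes the conclusion hold uniformly.
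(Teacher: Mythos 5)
Your proposal is correct and follows essentially the same route as the paper: parts (i) and (ii) are read off from the fact that $T_1$, $T_2$ are obtained from \reff{case1}, \reff{case2} by appending the vacuous-or-restrictive constraint $f(x)\geq \ell_i$, and part (iii) rests on the identity $\xi^T a(x)=\lambda\,\xi^T b(x)$ together with a sign split on $\xi^T b(x)$ to place every normalized C-eigenvector in \reff{case1} or \reff{case2}. Your explicit factorization $\xi^T a(x)\,\mB x^{m-1}-\xi^T b(x)\,\mA x^{m-1}=\xi^T b(x)\,(\lambda\mB x^{m-1}-\mA x^{m-1})$ merely spells out a verification the paper leaves implicit.
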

\begin{proof}
For every C-eigenpair $(\lmd, x)$,
it holds that $a(x) - \lmd b(x) = 0$, so
\[
\xi^T a(x) - \lmd \xi^T b(x) = 0.
\]
If $\xi^T b(x) >0$, $x$ satisfies \reff{case1}.
If $\xi^T b(x) <0$, $x$ satisfies \reff{case2}.
If $\xi^T b(x) =0$, then $\xi^T a(x) = 0$ and
$x$ satisfies both \reff{case1} and \reff{case2}.

(i) Every C-eigenvector $x$ satisfying
\reff{case1} and $f(x) \geq \ell_i$
belongs to the set $T_1$. So, if $T_1 = \emptyset$,
then no C-eigenvector $x$ satisfies
\reff{case1} and $f(x) \geq \ell_i$.

(ii) The proof is same as for the item (i).

(iii) For the case $i=1$, the set $T_1$ is same as \reff{case1},
and $T_2$ is same as \reff{case2}, because
$\ell_1 \leq f^{(1)}_1$ and $\tilde \ell_1 \leq f^{(2)}_1$.
So, if $T_1 = \emptyset$, then \reff{case1} is empty;
if $T_2 = \emptyset$, then \reff{case2} is empty.
If $T_1 = T_2 = \emptyset$, then \reff{case1} and \reff{case2}
are both empty, i.e., $(\mA, \mB)$ has no C-eigenpairs.
\end{proof}

\begin{lemma}  \label{lm:cho:dt:gen}
Assume that $V_{\re}(h)$ is a finite set.
Let $\theta_1,\theta_2$ be as in
\reff{k1clarg:vec}, \reff{theta2:cas2}.
Then, for $\dt>0$, $\ell_i =f^{(1)}_{i-1} + \dt$
satisfies \reff{cho:ell-i:caseI} if and only if $\theta_1 = f^{(1)}_{i-1}$,
and $\tilde \ell_i =f^{(2)}_{i-1} + \dt$  satisfies \reff{val:ell-i:c2}
if and only if $\theta_2 = f^{(2)}_{i-1}$.
\end{lemma}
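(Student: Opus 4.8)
The plan is to use the finiteness of $V_{\re}(h)$ to reduce the statement to an elementary comparison among the finitely many values that $f$ attains, so that both $\theta_1 = f^{(1)}_{i-1}$ and the choice $\ell_i = f^{(1)}_{i-1}+\dt$ being admissible turn out to be equivalent to the single inequality $\dt < f^{(1)}_i - f^{(1)}_{i-1}$. First I would note that the feasible set of \reff{larg:vec2} is cut out by $h=0$ together with the inequalities $g\geq 0$, hence is contained in $V_{\re}(h)$; since $V_{\re}(h)$ is assumed finite, this feasible set is finite, and $f$ attains there exactly the distinct values $f^{(1)}_1 < \cdots < f^{(1)}_{N_1}$ of \reff{seq:f(1):i=1:N1}. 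The same remark applies to \reff{larg:vec2-2}, whose feasible set also sits inside $V_{\re}(h)$, giving the finite list $f^{(2)}_1 < \cdots < f^{(2)}_{N_2}$.

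Next I would identify $\theta_1$ explicitly. The feasible set of \reff{k1clarg:vec} is precisely the feasible set of \reff{larg:vec2} intersected with the sublevel set $\{x : f(x) \leq f^{(1)}_{i-1}+\dt\}$, so the values of $f$ attained there are exactly those $f^{(1)}_j$ with $f^{(1)}_j \leq f^{(1)}_{i-1}+\dt$. Therefore
\[
\theta_1 \, = \, \max \{\, f^{(1)}_j : 1 \leq j \leq N_1, \ f^{(1)}_j \leq f^{(1)}_{i-1}+\dt \,\}.
\]
Because $\dt>0$, the index $j=i-1$ always qualifies, so $\theta_1 \geq f^{(1)}_{i-1}$ and the maximum is attained over this finite set.

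Finally I would carry out the two-way equivalence. Since $\ell_i = f^{(1)}_{i-1}+\dt$ and $\dt>0$, the left inequality in \reff{cho:ell-i:caseI} is automatic, so \reff{cho:ell-i:caseI} is equivalent to $\dt < f^{(1)}_i - f^{(1)}_{i-1}$. On the other side, $\theta_1 = f^{(1)}_{i-1}$ holds exactly when no attained value strictly above $f^{(1)}_{i-1}$ lies in the sublevel set; as the next larger attained value is $f^{(1)}_i$, this means $f^{(1)}_i > f^{(1)}_{i-1}+\dt$, i.e. again $\dt < f^{(1)}_i - f^{(1)}_{i-1}$. Conversely, $\theta_1 > f^{(1)}_{i-1}$ forces $\theta_1 \geq f^{(1)}_i$, hence $f^{(1)}_i \leq f^{(1)}_{i-1}+\dt$, which is exactly the failure of \reff{cho:ell-i:caseI}. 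This establishes the claim for Case I, and the identical argument with $\tilde g$, $\theta_2$, the problem \reff{theta2:cas2}, and the list $f^{(2)}_j$ settles the equivalence with \reff{val:ell-i:c2} for Case II.

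I do not expect a serious obstacle: the only point needing care is the bookkeeping that connects $\theta_1$ to the largest attained value of $f$ not exceeding $f^{(1)}_{i-1}+\dt$, which relies on $f$ taking only finitely many values on the feasible set — precisely what the hypothesis on $V_{\re}(h)$ supplies. A minor edge case is $i=1$, where $f^{(1)}_0$ is merely a chosen value below $f^{(1)}_1$; there the sublevel set always contains a feasible point and the same monotonicity comparison goes through unchanged.
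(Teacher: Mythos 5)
Your proof is correct and follows essentially the same route as the paper's: both reduce the claim to the observation that, by finiteness of $V_{\re}(h)$, the value $\theta_1$ is the largest of the finitely many attained values $f^{(1)}_j$ not exceeding $f^{(1)}_{i-1}+\dt$, from which the equivalence with \reff{cho:ell-i:caseI} is immediate. Your write-up simply makes the bookkeeping more explicit than the paper's one-line version.
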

\begin{proof}
Since $V_{\re}(h)$ is a finite set,
\reff{larg:vec2} has finitely many objective values on its feasible set,
and they can be ordered as in \reff{seq:f(1):i=1:N1}.
The optimal value $\theta_1$ of \reff{k1clarg:vec} is
the maximum objective value of \reff{larg:vec2}
that is less than or equal to $f^{(1)}_{i-1}+ \dt$.
Then, \reff{cho:ell-i:caseI} is satisfied if and only if $\theta_1 = f^{(1)}_{i-1}$.
The proof is same for the case of $\tilde \ell_i$.
\end{proof}

\section{Numerical Experiments}
\label{sc:numexp}

In this section, we present numerical experiments for solving
tensor eigenvalue complementarity problems.
The Lasserre type semidefinite relaxations are solved by
the software
GloptiPoly 3 \cite{HenrionJJ} and SeDuMi \cite{Sturm}.
The experiments are implemented on
a laptop with an Intel Core i5-2520M CPU (2.50GHz)
and 8GB of RAM, using Matlab R2014b.
We display 4 decimal digits for numerical numbers.

We use $\mc{I}$ to denote the identity tensor
(i.e., $\mc{I}_{i_1\cdots i_m}= 1$ if $i_1=\cdots = i_m$,
and $\mc{I}_{i_1\cdots i_m}= 0$ otherwise).
When $\mB$ is strictly copositive.
Algorithm~\ref{Algorithmcop} is applied to solve the TEiCP;
otherwise, Algorithm~\ref{Algorithmgen} is used.

\bex  \label{Exmp:LHQ}
(i) (\cite[Example 5.1]{Ling2015}).
Consider the tensors $\mA, \mB \in \mathrm{T}^4(\mathbb{R}^{2})$ given as
{\tiny
 \[
 \begin{array}{cc}
   \mA(:,:,1,1)=\left(
                \begin{array}{lr}
                  0.8147 &0.5164\\
                  0.5164& 0.9134 \\
                \end{array}
              \right), &

\mA(:,:,1,2)=\left(
  \begin{array}{cc}
            0.4218 &0.8540\\
            0.8540& 0.9595 \\
  \end{array}
\right),\\
\mA(:,:,2,1)=
\left(
  \begin{array}{cc}
    0.4218& 0.8540\\
    0.8540 &0.9595 \\
  \end{array}
\right), &
\mA(:,:,2,2)=
\left(
  \begin{array}{cc}
    0.6787 &0.7504\\
    0.7504 &0.3922 \\
  \end{array}
\right), \\
\mB(:,:,1,1)=
\left(
  \begin{array}{cc}
    1.6324 &1.1880\\
    1.1880 &1.5469 \\
  \end{array}
\right), &
\mB(:,:,1,2)=
\left(
  \begin{array}{cc}
    1.6557& 1.4424\\
    1.4424& 1.9340 \\
  \end{array}
\right), \\
\mB(:,:,2,1)=
\left(
  \begin{array}{cc}
    1.6557 &1.4424\\
    1.4424 &1.9340 \\
  \end{array}
\right), &
\mB(:,:,2,2)=
\left(
  \begin{array}{cc}
    1.6555& 1.4386\\
    1.4386 &1.0318 \\
  \end{array}
\right).
 \end{array}
\] \noindent}The
tensor $\mB$ is strictly copositive,
beause all its entries are positive.
By Algorithm~\ref{Algorithmcop}, we get three
C-eigenpairs $(\lmd_i, u_i)$:
\[\begin{array}{lll}
    \lambda_1=0.4678 , & u_1=(0.8328,    0.0585), \\
    \lambda_2=0.4848 , & u_2=(0.2577,    0.6538),\\
    \lambda_3=0.4991 , & u_3=(0.8847,    0.0000).
  \end{array}
\]
The computation takes about $2$ seconds.

\noindent
(ii) (\cite[Example 5.2]{Ling2015}).
Consider the tensors $\mathcal{A},\mathcal{B}\in \mathrm{T}^4(\mathbb{R}^{3})$ given as:
 {\tiny
 \[
   \mA(:,:,1,1)=\left(
                \begin{array}{ccc}
                  0.6229 &0.2644& 0.3567\\
                  0.2644 &0.0475 &0.7367\\
                  0.3567 &0.7367 &0.1259 \\
                \end{array}
              \right), \,
\mA(:,:,1,2)=\left(
  \begin{array}{ccc}
    0.7563 &0.5878& 0.5406\\
0.5878& 0.1379 &0.0715\\
0.5406 &0.0715& 0.3725 \\
  \end{array}
\right),
\]
\[
\mA(:,:,1,3)=
\left(
  \begin{array}{ccc}
    0.0657 &0.4918 &0.9312\\
0.4918 &0.7788 &0.9045\\
0.9312 &0.9045 &0.8711 \\
  \end{array}
\right), \,
\mA(:,:,2,1)=
\left(
  \begin{array}{ccc}
    0.7563 &0.5878& 0.5406\\
0.5878& 0.1379& 0.0715\\
0.5406 &0.0715& 0.3725\\
  \end{array}
\right),
\]
\[
\mA(:,:,2,2)=
\left(
  \begin{array}{ccc}
    0.7689 &0.3941 &0.6034\\
0.3941& 0.3577& 0.3465\\
0.6034& 0.3465& 0.4516 \\
  \end{array}
\right), \,
\mA(:,:,2,3)=
\left(
  \begin{array}{ccc}
    0.8077 &0.4910 &0.2953\\
0.4910& 0.5054 &0.5556\\
0.2953& 0.5556& 0.9608 \\
  \end{array}
\right),
\]
\[
\mA(:,:,3,1)=
\left(
  \begin{array}{ccc}
    0.0657& 0.4918& 0.9312\\
0.4918 &0.7788& 0.9045\\
0.9312 &0.9045& 0.8711 \\
  \end{array}
\right), \,
\mA(:,:,3,2)=
\left(
  \begin{array}{ccc}
    0.8077& 0.4910& 0.2953\\
0.4910 &0.5054& 0.5556\\
0.2953& 0.5556 &0.9608 \\
  \end{array}
\right),
\]
\[
\mA(:,:,3,3)=
\left(
  \begin{array}{ccc}
    0.7581& 0.7205& 0.9044\\
0.7205& 0.0782 &0.7240\\
0.9044 &0.7240 &0.3492 \\
  \end{array}
\right), \,
\mB(:,:,1,1)=
\left(
  \begin{array}{ccc}
    0.6954& 0.4018 &0.1406\\
0.4018 &0.9957& 0.0483\\
0.1406 &0.0483& 0.0988 \\
  \end{array}
\right),
\]
\[
\mB(:,:,1,2)=
\left(
  \begin{array}{ccc}
    0.6730& 0.5351& 0.4473\\
0.5351& 0.2853 &0.3071\\
0.4473& 0.3071 &0.9665 \\
  \end{array}
\right),  \,
\mB(:,:,1,3)=
\left(
  \begin{array}{ccc}
    0.7585& 0.6433& 0.2306\\
0.6433& 0.8986& 0.3427\\
0.2306& 0.3427& 0.5390 \\
  \end{array}
\right),
\]
\[
\mB(:,:,2,1)=
\left(
  \begin{array}{ccc}
    0.6730& 0.5351& 0.4473\\
0.5351& 0.2853 &0.3071\\
0.4473& 0.3071& 0.9665 \\
  \end{array}
\right),  \,
\mB(:,:,2,2)=
\left(
  \begin{array}{ccc}
    0.3608 &0.3914& 0.5230\\
0.3914 &0.6822& 0.5516\\
0.5230 &0.5516& 0.7091\\
  \end{array}
\right),
\]
\[
\mB(:,:,2,3)=
\left(
  \begin{array}{ccc}
    0.4632& 0.2043& 0.2823\\
0.2043 &0.7282& 0.7400\\
0.2823 &0.7400& 0.9369 \\
  \end{array}
\right),
\mB(:,:,3,1)=
\left(
  \begin{array}{ccc}
    0.7585& 0.6433& 0.2306\\
0.6433 &0.8986& 0.3427\\
0.2306 &0.3427& 0.5390 \\
  \end{array}
\right),
\]
\[
\mB(:,:,3,2)=
\left(
  \begin{array}{ccc}
    0.4632 &0.2043& 0.2823\\
0.2043& 0.7282 &0.7400\\
0.2823 &0.7400& 0.9369 \\
  \end{array}
\right), \,
\mB(:,:,3,3)=
\left(
  \begin{array}{ccc}
    0.8200& 0.5914& 0.4983\\
0.5914 &0.0762 &0.2854\\
0.4983& 0.2854 &0.1266 \\
  \end{array}
\right).
 \] \noindent}The
tensor $\mB$ is also strictly copositive,
beause all its entries are positive.
By Algorithm~\ref{Algorithmcop}, we get
three C-eigenpairs $(\lmd_i, u_i)$:
\[\begin{array}{ll}
    \lambda_1=1.5520, & u_1 =(0.2201,    0.1572,    0.8680), \\
    \lambda_2=2.3562, & u_2 =(0.0000,    0.0312,    1.5404),  \\
    \lambda_3=2.7583, & u_3 =(0.0000,    0.0000,    1.6765).
  \end{array}
\]
The computation takes about $15$ seconds.
\eex

\bex \label{Exmp:CYY} (\cite[\S5]{Chen2015}).
Consider the tensors $\mA, \mB \in \mathrm{T}^6(\mathbb{R}^{4})$
with $\mB =\mathcal{I}$ (the identity tensor)
and $\mA$ listed as in Table \ref{ex:CYY:table}.
Note that $\mA$ is a symmetric tensor, i.e.,
$\mA_{i_1i_2i_3i_4i_5i_6} = \mA_{j_1j_2j_3j_4j_5j_6}$
whenever $(i_1,i_2,i_3,i_4,i_5,i_6)$ is a permutation of $(j_1,j_2,j_3,j_4,j_5,j_6)$.
So, only its upper triangular entries are listed.
\begin{table}[htbp]
\caption{The symmetric tensor $\mA \in \mathrm{T}^6(\mathbb{R}^{4})$
in Example~\ref{Exmp:CYY}.}
\begin{center} \footnotesize
\renewcommand{\arraystretch}{1.2}
{\tiny
\begin{tabular}{llrllrllrllr} \hline
$\mA_{111111}$  =\ \ 0.5000,   & $\mA_{111112}$  = -0.2369,    & $\mA_{111113}$  =\ \   0.1953, & $\mA_{111114}$  = -0.2691,\\
$\mA_{111122}$  =\ \ 0.0835,   & $\mA_{111123}$  = -0.2016,    & $\mA_{111124}$  = -0.0441,     & $\mA_{111133}$  =\ \   0.0567,\\
$\mA_{111134}$  = -0.2784,     & $\mA_{111144}$  =\ \  0.2321, & $\mA_{111222}$  = -0.1250,     & $\mA_{111223}$  =\ \   0.0333,\\
$\mA_{111224}$  =\ \ 0.0235,   & $\mA_{111233}$  =\ \  0.0093, & $\mA_{111234}$  = -0.0304,     & $\mA_{111244}$  = -0.0167,\\
$\mA_{111333}$  =\ \ 0.1028,   & $\mA_{111334}$  = -0.0385,    & $\mA_{111344}$  =\ \   0.0068, & $\mA_{111444}$  =\ \   0.1627,\\
$\mA_{112222}$  = -0.1002,     & $\mA_{112223}$  =\ \  0.0733, & $\mA_{112224}$  =\ \   0.0607, & $\mA_{112233}$  = -0.1125,\\
$\mA_{112234}$  =\ \ 0.0096,   & $\mA_{112244}$  =  -0.0810,   & $\mA_{112333}$  = -0.0299,     & $\mA_{112334}$  =\ \   0.0153,\\
$\mA_{112344}$  =\ \ 0.0572,   & $\mA_{112444}$  =\ \  0.0251, & $\mA_{113333}$  =\ \   0.1927, & $\mA_{113334}$  = -0.1024,\\
$\mA_{113344}$  = -0.0885,     & $\mA_{113444}$  =\ \  0.0289, & $\mA_{114444}$  = -0.0668,     & $\mA_{122222}$  = -0.2707,\\
$\mA_{122223}$  = -0.1066,     & $\mA_{122224}$  = -0.1592,    & $\mA_{122233}$  =\ \   0.0805, & $\mA_{122234}$  = -0.0540,\\
$\mA_{122244}$  = -0.0434,     & $\mA_{122333}$  = -0.0048,    & $\mA_{122334}$  = -0.0118,     & $\mA_{122344}$  =\ \   0.0196,\\
$\mA_{122444}$  = -0.0585,     & $\mA_{123333}$  = -0.0442,    & $\mA_{123334}$  = -0.0618,     & $\mA_{123344}$  =\ \   0.0318,\\
$\mA_{123444}$  =\ \ 0.0332,   & $\mA_{124444}$  = -0.2490,    & $\mA_{133333}$  =\ \   0.1291, & $\mA_{133334}$  =\ \   0.0704,\\
$\mA_{133344}$  = -0.0032,     & $\mA_{133444}$  =\ \  0.0270, & $\mA_{134444}$  =\ \   0.0232, & $\mA_{144444}$  = -0.3403,\\
$\mA_{222222}$  = -0.6637,     & $\mA_{222223}$  =\ \  0.2191, & $\mA_{222224}$  =\ \   0.3280, & $\mA_{222233}$  =\ \   0.1834,\\
$\mA_{222234}$  =\ \ 0.0627,   & $\mA_{222244}$  =\ \ 0.0860,  & $\mA_{222333}$  =\ \   0.1590, & $\mA_{222334}$  = -0.0217,\\
$\mA_{222344}$  =\ \ 0.1198,   & $\mA_{222444}$  = -0.1674,    & $\mA_{223333}$  =\ \   0.0549, & $\mA_{223334}$  = -0.0868,\\
$\mA_{223344}$  =\ \ 0.0043,   & $\mA_{223444}$  =\ \  0.0101, & $\mA_{224444}$  = -0.0307,     & $\mA_{233333}$  = -0.3553,\\
$\mA_{233334}$  =\ \ 0.0207,   & $\mA_{233344}$  =\ \  0.1544, & $\mA_{233444}$  = -0.1707,     & $\mA_{234444}$  = -0.3557,\\
$\mA_{244444}$  = -0.1706,     & $\mA_{333333}$  =\ \  0.7354, & $\mA_{333334}$  = -0.3628,     & $\mA_{333344}$  = -0.2650,\\
$\mA_{333444}$  = -0.0479,     & $\mA_{334444}$  = -0.0084,    & $\mA_{344444}$  = -0.0559,     & $\mA_{444444}$  =\ \   0.6136.\\\hline
\end{tabular}
}
\end{center}
\label{ex:CYY:table}
\end{table}
The tensor $\mB$ is strictly copositive.
We apply Algorithm~\ref{Algorithmcop}
and get fifteen C-eigenpairs $(\lmd_i, u_i)$:
\[\begin{array}{rrr}
\lambda_1=&-12.7096, & u_1=(0.7814,    0.7331,    0.7630,    0.8654), \\
\lambda_2=&-9.3276, & u_2=(0.7414,    0.8448,    0.1123,    0.8819),\\
\lambda_3=&-6.9921, & u_3=(0.0000,    0.5798,    0.8395,    0.9214),\\
\lambda_4=&-4.8469, & u_4=(0.7907,    0.0000,    0.8629,    0.8365),\\
\lambda_5=&-3.1530, & u_5=(0.1704,    0.0000,    0.9300,    0.8406),\\
\lambda_6=&-0.9797, & u_6=(0.0000,    0.8032,    0.0000,    0.9492),\\
\lambda_7=&-0.0933, & u_7=(0.4471,    0.0000,    0.0186,    0.9987),\\
\lambda_8=&0.3394, & u_8=(1.0000,    0.0000,    0.0000,    0.1880),\\
\lambda_9=&0.6136, & u_9=(0.0000,    0.0000,    0.0000,    1.0000), \\
\lambda_{10}=&0.9215, & u_{10}=(0.5942,    0.5831,    0.9856,    0.0000),\\
\lambda_{11}=&1.7772, & u_{11}=(0.9431,    0.0000,    0.0000,    0.8165),\\
\lambda_{12}=&3.0313, & u_{12}=(0.0000,    0.9338,    0.8342,    0.0887),\\
\lambda_{13}=&3.1009, & u_{13}=(0.0000,    0.9239,    0.8504,    0.0000),\\
\lambda_{14}=&3.3208, & u_{14}=(0.0000,    0.9619,    0.7672,    0.4016),\\
\lambda_{15}=&4.5057, & u_{15}=(0.8754,    0.0000,    0.9051,    0.0000).
\end{array}
\]
The computation takes about $16083$ seconds.

\eex

\bex \label{Exmp:CQ} (\cite[\S5]{ChenQi2015})
Consider the tensors $\mA, \mB \in \mathrm{T}^6(\mathbb{R}^{4})$
with $\mB =\mathcal{I}$
and $\mA$ listed as in Table \ref{ex:CQ:table}.
The tensor $\mA$ is symmetric, so only the upper triangular entries are listed.
\begin{table}[htbp]
\caption{The symmetric tensor $\mA \in \mathrm{T}^6(\mathbb{R}^{4})$
in Example~\ref{Exmp:CQ}.}
\begin{center} \footnotesize
\renewcommand{\arraystretch}{1.2}
{\tiny
\begin{tabular}{llrllrllrllr} \hline
$\mA_{111111}$  = 0.1197,   & $\mA_{111112}$  = 0.4859,    & $\mA_{111113}$  = 0.4236,     & $\mA_{111114}$  = 0.1775,\\
$\mA_{111122}$  = 0.4639,   & $\mA_{111123}$  = 0.4951,    & $\mA_{111124}$  = 0.5322,     & $\mA_{111133}$  = 0.4219,\\
$\mA_{111134}$  = 0.4606,   & $\mA_{111144}$  = 0.4646,    & $\mA_{111222}$  = 0.4969,     & $\mA_{111223}$  = 0.4649,\\
$\mA_{111224}$  = 0.5312,   & $\mA_{111233}$  = 0.5253,    & $\mA_{111234}$  = 0.4635,     & $\mA_{111244}$  = 0.4978,\\
$\mA_{111333}$  = 0.5562,   & $\mA_{111334}$  = 0.5183,    & $\mA_{111344}$  = 0.4450,     & $\mA_{111444}$  = 0.4754,\\
$\mA_{112222}$  = 0.4992,   & $\mA_{112223}$  = 0.5420,    & $\mA_{112224}$  = 0.4924,     & $\mA_{112233}$  = 0.5090,\\
$\mA_{112234}$  = 0.4844,   & $\mA_{112244}$  = 0.5513,    & $\mA_{112333}$  = 0.5040,     & $\mA_{112334}$  = 0.4611,\\
$\mA_{112344}$  = 0.4937,   & $\mA_{112444}$  = 0.5355,    & $\mA_{113333}$  = 0.4982,     & $\mA_{113334}$  = 0.4985,\\
$\mA_{113344}$  = 0.4756,   & $\mA_{113444}$  = 0.4265,    & $\mA_{114444}$  = 0.5217,     & $\mA_{122222}$  = 0.2944,\\
$\mA_{122223}$  = 0.5123,   & $\mA_{122224}$  = 0.4794,    & $\mA_{122233}$  = 0.5046,     & $\mA_{122234}$  = 0.4557,\\
$\mA_{122244}$  = 0.5332,   & $\mA_{122333}$  = 0.5161,    & $\mA_{122334}$  = 0.5236,     & $\mA_{122344}$  = 0.5435,\\
$\mA_{122444}$  = 0.5576,   & $\mA_{123333}$  = 0.5685,    & $\mA_{123334}$  = 0.5077,     & $\mA_{123344}$  = 0.5138,\\
$\mA_{123444}$  = 0.5402,   & $\mA_{124444}$  = 0.4774,    & $\mA_{133333}$  = 0.6778,     & $\mA_{133334}$  = 0.4831,\\
$\mA_{133344}$  = 0.5030,   & $\mA_{133444}$  = 0.4865,    & $\mA_{134444}$  = 0.4761,     & $\mA_{144444}$  = 0.3676,\\
$\mA_{222222}$  = 0.1375,   & $\mA_{222223}$  = 0.5707,    & $\mA_{222224}$  = 0.5440,     & $\mA_{222233}$  = 0.5135,\\
$\mA_{222234}$  = 0.5770,   & $\mA_{222244}$  = 0.6087,    & $\mA_{222333}$  = 0.5075,     & $\mA_{222334}$  = 0.4935,\\
$\mA_{222344}$  = 0.5687,   & $\mA_{222444}$  = 0.5046,    & $\mA_{223333}$  = 0.5226,     & $\mA_{223334}$  = 0.4652,\\
$\mA_{223344}$  = 0.5289,   & $\mA_{223444}$  = 0.4810,    & $\mA_{224444}$  = 0.5310,     & $\mA_{233333}$  = 0.6187,\\
$\mA_{233334}$  = 0.5811,   & $\mA_{233344}$  = 0.4811,    & $\mA_{233444}$  = 0.4883,     & $\mA_{234444}$  = 0.4911,\\
$\mA_{244444}$  = 0.4452,   & $\mA_{333333}$  = 0.1076,    & $\mA_{333334}$  = 0.6543,     & $\mA_{333344}$  = 0.4257,\\
$\mA_{333444}$  = 0.5786,   & $\mA_{334444}$  = 0.5956,    & $\mA_{344444}$  = 0.4503,     & $\mA_{444444}$  = 0.3840.\\\hline
\end{tabular}
}
\end{center}
\label{ex:CQ:table}
\end{table}
The tensor $\mB$ is copositive. We apply Algorithm~\ref{Algorithmcop}
and get only one C-eigenpair:
\[\begin{array}{rrr}
\lambda_1=&515.4181, & u_1=(0.7909, 0.7957, 0.7941, 0.7941).
\end{array}
\]
The computation takes about $140$ seconds.
\eex

\bex   \label{ExTNW2}
Consider the tensors $\mA, \mB  \in \mathrm{T}^3(\re^n)$ given as:
\[
\mathcal{A}_{ijk}= \frac{(-1)^{j}}{i} + \frac{(-1)^{k}}{j} + \frac{(-1)^{i}}{k},\;
\mathcal{B}=\mathcal{I}.
\]
By Algorithm~\ref{Algorithmcop}, for $n=3$,
we get seven C-eigenpairs $(\lmd_i, u_i)$:
\[\begin{array}{rrr}
\lambda_1=&-8.7329,& u_1 =(0.8432,    0.2568,    0.7266), \\
\lambda_2=&-8.1633, & u_2 =(0.8529,    0.0000,    0.7241),\\
\lambda_3=&-3.1458, & u_3 =(0.9982,    0.1768,    0.0000),\\
\lambda_4=&-3.0000, & u_4 =(1.0000,    0.0000,    0.0000),\\
\lambda_5=&-1.2863, & u_5 =(0.0000,    0.3171,    0.9893),\\
\lambda_6=&-1.0000, & u_6 =(0.0000,    0.0000,    1.0000),\\
\lambda_7=&2.1458, & u_7=(0.3491,    0.9856,    0.0000).
\end{array}
\]
When $n=4$, we get seven C-eigenpairs $(\lmd_i, u_i)$:
\[\begin{array}{rrr}
\lambda_1=&-8.3411,& u_1 =(0.8498,    0.0000,    0.7253,    0.1674), \\
\lambda_2=&-8.1633, & u_2 =(0.8529,    0.0000,    0.7241,    0.0000),\\
\lambda_3=&-3.0413, & u_3 =(0.9996,    0.0000,    0.0000,    0.1043),\\
\lambda_4=&-3.0000, & u_4 =(1.0000,    0.0000,    0.0000,    0.0000),\\
\lambda_5=&-1.0971, & u_5 =(0.0000,    0.0000,    0.9960,    0.2284),\\
\lambda_6=&-1.0000, & u_6 =(0.0000,    0.0000,    1.0000,    0.0000),\\
\lambda_7=&6.6817, & u_7=(0.4382,    0.7963,    0.0000,    0.7434).
\end{array}
\]
For $n=3$, the computation takes about $21$ seconds;
for $n=4$, it takes about $138$ seconds.
When $n=5$, thirteen C-eigenvalues are obtained.
The computer is out of memory for computing the resting C-eigenvalues.
\eex

\bex\label{ExTNZ3}
Consider the tensors $\mA, \mB \in \mathrm{T}^5(\mathbb{R}^{n})$ such that
\[
\mathcal{A}_{i_1 \cdots i_5}=\Big(\sum_{j=1}^5 (-1)^{j+1} \exp(i_j) \Big)^{-1},\;
\mathcal{B}=\mathcal{I}.
\]
By Algorithm~\ref{Algorithmcop}, for $n = 3$,
we get only one C-eigenpair:
\[\begin{array}{lll}
\lambda_1=2.4335, & u_1=(0.7526,    0.6080,    0.9245).
\end{array}
\]
When $n = 4$, we get only one C-eigenpair:
\[\begin{array}{lll}
\lambda_1=5.4419, & u_1=(0.7391,    0.6412,    0.7719,    0.8313).
\end{array}
\]
When $n = 5$, we get only one C-eigenpair:
\[\begin{array}{lll}
\lambda_1=8.8555, & u_1=(0.7347,    0.6513,    0.7212,    0.7404,    0.7585).
\end{array}
\]
For $n=3$, the computation takes about $7$ seconds;
for $n=4$, it takes about $44$ seconds;
for $n=5$, it takes about $2662$ seconds.
\eex

In the following examples, the tensor $\mB$ is not strictly copositive.
So, Algorithm~\ref{Algorithmgen} is applied.

\bex\label{Example6673}
Consider the tensors $\mA, \mB \in \mathrm{T}^3(\re^n)$ given as:
\[
\mathcal{A}_{ijk}=\tan(i - \frac{j}{2} + \frac{k}{3}),\;
\mathcal{B}_{ijk}=\frac{(-1)^{j}}{i} + \frac{(-1)^{k}}{j} + \frac{(-1)^{i}}{k}.
\]
By Algorithm~\ref{Algorithmgen},
for $n = 3$,  we get two C-eigenpairs $(\lmd_i, u_i)$:
\[\begin{array}{rrr}
\lambda_1=&-4.0192, & u_1=(0.5171,    0.8559,    0.0000), \\
\lambda_2=&-0.3669, & u_2=(1.0000,    0.0000,    0.0000).
\end{array}
\]
When $n=4$, we get two C-eigenpairs $(\lmd_i, u_i)$:
\[\begin{array}{lll}
\lambda_1=&-0.8408, & u_1=(0.7095,    0.4519,    0.0000,    0.5407), \\
\lambda_2=&-0.2332, & u_2=(0.9962,    0.0000,    0.0000,    0.0874). \\
\end{array}
\]
When $n=5$, we get six C-eigenpairs $(\lmd_i, u_i)$:
\[\begin{array}{lll}
\lambda_1=&-13.3912, & u_1=(0.0000,    0.0000,    0.0000,    0.3370,    0.9415), \\
\lambda_2=&-4.1204, & u_2=(0.0000,    0.0398,    0.0000,    0.0470,    0.9981), \\
\lambda_3=&-0.8408, & u_3=(0.7095,    0.4519,    0.0000,    0.5407,    0.0000), \\
\lambda_4=&-0.8216, & u_4=(0.7004,    0.4548,    0.0000,    0.5501,    0.0068), \\
\lambda_5=&-0.4376,  & u_5=(0.6150,   0.1435,    0.4245,    0.3803,    0.5257), \\
\lambda_6=&-0.2332,  & u_6=(0.9962,   0.0000,    0.0000,    0.0874,    0.0000). \\
\end{array}
\]
For $n=3$, the computation takes about $2$ seconds;
for $n=4$, it takes about $9$ seconds;
for $n=5$, it takes about $3003$ seconds.
\eex

\bex\label{Example69124}
Consider the tensors $\mA, \mB \in \mathrm{T}^4(\mathbb{R}^{n})$ such that
\[
\baray{l}
\mathcal{A}_{i_1 i_2 i_3 i_4}=\frac{1}{10}(i_1 + 2 i_2 +3i_3 +4i_4
-\sqrt{i_1^2+i_2^2+i_3^2+i_4^2}), \\
\mathcal{B}_{i_1 i_2 i_3 i_4}=\arctan(i_1 i_2 i_3 i_4).
\earay
\]
We apply Algorithm~\ref{Algorithmgen} to compute the C-eigenpairs.
When $n = 3$, we get three C-eigenpairs $(\lmd_i, u_i)$:
\[\begin{array}{lll}
\lambda_1=&0.8706, & u_1=(1.0000,    0.0000,    0.0000), \\
\lambda_2=&0.9780, & u_2=(0.6209,    0.0000,    0.7839), \\
\lambda_3=&1.3163, & u_3=(0.0000,    0.0000,    1.0000).
\end{array}
\]
When $n = 4$, we also get three C-eigenpairs $(\lmd_i, u_i)$:
\[\begin{array}{lll}
\lambda_1=&0.8706, & u_1=(1.0000,    0.0000,    0.0000,    0.0000), \\
\lambda_2=&1.0698, & u_2=(0.7850,    0.0000,    0.0000,    0.6195), \\
\lambda_3=&1.7455, & u_3=(0.0000,    0.0000,    0.0000,    1.0000).
\end{array}
\]
When $n = 5$, we also get three C-eigenpairs $(\lmd_i, u_i)$:
\[\begin{array}{lll}
\lambda_1=&0.8706, & u_1=(1.0000,    0.0000,    0.0000,    0.0000,    0.0000), \\
\lambda_2=&1.1536, & u_2=(0.8527,    0.0000,    0.0000,    0.0000,    0.5224), \\
\lambda_3=&2.1787, & u_3=(0.0000,    0.0000,    0.0000,    0.0000,    1.0000).
\end{array}
\]
For $n=3$, the computation takes about $6$ seconds;
for $n=4$, it takes about $35$ seconds;
for $n=5$, it takes about $716$ seconds.
\eex

\bex\label{Example610144}
Consider the tensors $\mA,\mB \in \mathrm{T}^4(\mathbb{R}^{n})$ such that
\[
 \mathcal{A}_{i_1 i_2 i_3 i_4}=(1+i_1 +2 i_2 +3 i_3 + 4 i_4)^{-1},\;
 \mathcal{B}_{i_1 i_2 i_3 i_4}=\tan(i_1) + \cdots + \tan(i_4).
\]
Algorithm~\ref{Algorithmgen} is applied.
When $n = 3,4,5$, the relaxations \reff{min<f,z>:mom:I}
and \reff{mfz:mom:c2} for the order $k=4$ are both infeasible,
so there are no C-eigenvalues.
For $n=3$, the computation takes about $2$ seconds;
for $n=4$, it takes about $8$ seconds;
for $n=5$, it takes about $43$ seconds.
\eex

\bex\label{Example5Trand35}
Consider two randomly generated tensors
$\mathcal{A},\mathcal{B}\in \mathrm{T}^3(\mathbb{R}^{5})$:
{\tiny
 \[
   A(:,:,1) =
   \left(
     \begin{array}{rrrrr}
    0.0195  & -0.8385  & -0.5971  & -0.9968 &   0.8617\\
    1.2397  &  1.8190  &  0.3261 &  -1.0365&   -0.6295\\
   -0.1187 &   0.2297 &   1.5407 &  -1.0985 &   0.1256\\
    1.3101 &  -0.9982 &   1.1868 &   0.2386 &   2.4171\\
    1.4264  &  2.4354 &  -0.4358  & -1.4201  &  0.5474 \\
     \end{array}
   \right),
 \]
 \[
A(:,:,2) =\left(
     \begin{array}{rrrrr}
    1.0276 &  -1.0345 &  -0.6651 &  -0.7659  &  0.1898\\
    0.3746 &   0.6527 &  -1.1189 &   0.8586  &  0.8419\\
    0.8412 &   0.8611 &   0.0405 &  -0.3752 &  -0.2802\\
    0.0881 &  -1.3853  &  1.8775 &   0.2183 &   0.1735\\
    0.7881 &   0.5900 &   0.0509 &  -0.7750  &  0.0494 \\
     \end{array}
   \right),
\]
 \[
A(:,:,3) =\left(
     \begin{array}{rrrrr}
   -0.5525  & -0.2713 &  -0.3630  &  0.8350&   -0.0891\\
    0.2359  & -1.8207 &   0.6906  & -1.7055&   -1.2772\\
    2.0821  & -1.3487  & -0.4501  &  0.8657&    1.4453\\
   -0.7056  &  0.2588 &  -0.5409  & -0.6727 &  -1.7967\\
    0.7374 &   0.3692 &  -0.8594   & 0.7489 &   0.3929 \\
     \end{array}
   \right),
\]
\[
A(:,:,4) =\left(
     \begin{array}{rrrrr}
     0.4575 &   0.3137 &   0.4335 &   0.9388 &   0.0927\\
   -0.0042  &  1.6425  &  0.8085 &  -1.5722 &  -0.9639\\
   -0.1085  & -0.0514 &  -1.3662  &  0.3091 &  -3.1922\\
   -1.2807  & -0.2399  & -1.1180  & -1.2672 &   0.2671\\
   -1.0279 &  -0.9839 &  -0.3586  &  0.7765 &   0.4211 \\
     \end{array}
   \right),
\]
\[
A(:,:,5) =\left(
     \begin{array}{rrrrr}
     2.0504 &   0.4528 &  -1.7698 &  -2.5073 &  -0.1142\\
   -0.0395  &  0.3460 &  -0.1017  & -1.5303 &   0.1027\\
   -0.4152  & -1.2332 &  -0.1069 &  -1.2440 &   1.6888\\
   -0.8989  & -0.3438&   -2.5825 &  -0.4245&   -0.8625\\
   -1.6842  & -0.7582 &  -1.7254 &  -0.1353 &  -0.0564 \\
     \end{array}
   \right),
\]
\[
B(:,:,1) =\left(
     \begin{array}{rrrrr}
    0.1278  & -1.2405&   -1.5521 &  -0.3097  &  0.4371\\
    1.0476 &  -1.1941 &  -0.1954 &  -1.3133 &   0.3712\\
   -0.8638 &   0.4681 &   0.1090 &   0.8267 &  -0.7007\\
   -1.6955  &  1.0037 &   0.9138 &  -0.0934 &  -0.1997\\
   -0.5110  & -0.2755&   -0.8768  & -0.3897  & -0.2546 \\
     \end{array}
   \right),
\]
\[
B(:,:,2) =\left(
     \begin{array}{rrrrr}
    0.1286  &  0.5255 &   0.3809 &   0.1088 &  -1.2674\\
    0.2852  & -1.1047 &  -0.8320  &  0.9058 &  -2.3433\\
   -1.5964 &   0.3327 &   0.1657  &  0.2164 &   0.4927\\
   -0.9393 &  -0.9674  & -0.4843  &  0.4749  &  0.4720\\
   -0.6881 &   1.7844  &  2.0353  &  0.5464  &  0.7580\\
     \end{array}
   \right),
\]
\[
B(:,:,3) =\left(
     \begin{array}{rrrrr}
    0.4473  &  0.8023  &  2.1941 &   1.7633 &  -2.0100\\
    0.8716  &  0.1619  &  0.0832 &   1.0375 &   1.0234\\
   -0.4001 &   1.0824  &  0.4427 &   1.6162 &   0.1706\\
   -0.2331 &   0.2375  & -0.0875 &  -0.5156  & -1.0727\\
    0.6626 &   0.1542  &  0.3014 &   1.1429  & -0.1337 \\
     \end{array}
   \right),
\]
\[
B(:,:,4) =\left(
     \begin{array}{rrrrr}
    -0.4009 &   0.8938 &   0.5559 &   0.8235  &  0.3279\\
   -0.9912  &  0.3709  &  0.4380 &  -0.2003  & -0.4898\\
   -0.5454  &  2.6579 &   1.1804 &   1.3327 &  -0.2990\\
    0.5980 &   1.1167 &   0.6838  & -0.4269 &   0.2665\\
    0.7989 &  -0.5784&   -1.1768 &  -1.1067 &   0.1850 \\
     \end{array}
   \right),
\]
\[
B(:,:,5) =\left(
     \begin{array}{rrrrr}
   -0.3332 &   0.3925 &   1.5851&   -0.2666  & -0.4003\\
    0.8811  &  0.4142 &  -0.7639 &   0.6644  &  0.0389\\
    0.4362  &  0.3792 &   1.5087 &  -0.6220  & -0.4257\\
    2.3515  & -0.7528 &   0.9182  & -1.3888 &   0.0862\\
    0.8837  & -0.6053 &   2.6629  & -1.9644  & -0.9562 \\
     \end{array}
   \right).
\] \noindent}By
Algorithm~\ref{Algorithmgen}, we get five C-eigenpairs $(\lmd_i, u_i)$:
\[\begin{array}{rrl}
\lambda_1=&-0.3593, & u_1=(0.1195,    0.2810,    0.9522,   0.0000,   0.0000), \\
\lambda_2=&0.0717,  & u_2=(0.8084,   0.0000,    0.3062,    0.4481,    0.2278),\\
\lambda_3=&0.2998,  & u_3=(0.0000,  0.9292,    0.3696,    0.0000,    0.0000),\\
\lambda_4=&0.8616,  & u_4=(0.7547,   0.0000,    0.3079,    0.3919,    0.4267),\\
\lambda_5=&2.1402,  & u_5=(0.7067,    0.3554,    0.3536,    0.2436,    0.4358).
\end{array}
\]
The computation takes about $995$ seconds.
\eex


\end{document}